\newtheorem{definition}{Definition}[section]
\newtheorem{remark}[definition]{Remark}
\newtheorem{corollary}[definition]{Corollary}
\newtheorem{lemma}[definition]{Lemma}
\newtheorem{proposition}[definition]{Proposition}
\newtheorem{example}[definition]{Example}
\newtheorem{question}[definition]{Question}
\def\Ind#1#2{#1\setbox0=\hbox{$#1x$}\kern\wd0\hbox to 0pt{\hss$#1\mid$\hss}
\lower.9\ht0\hbox to 0pt{\hss$#1\smile$\hss}\kern\wd0}
\def\ind{\mathop{\mathpalette\Ind{}}}
\def\Notind#1#2{#1\setbox0=\hbox{$#1x$}\kern\wd0\hbox to 0pt{\mathchardef
\nn=12854\hss$#1\nn$\kern1.4\wd0\hss}\hbox to 
0pt{\hss$#1\mid$\hss}\lower.9\ht0
\hbox to 0pt{\hss$#1\smile$\hss}\kern\wd0}
\def\nind{\mathop{\mathpalette\Notind{}}}
\title{Foundation ranks and supersimplicity}
\author{Santiago C\'ardenas-Mart\'in, Rafel Farr\'e}
\date{Version 2022-02-15}
\begin{document} 

\maketitle
\thispagestyle{empty}

\bigskip
\begin{abstract}
	We introduce a new foundation rank based in the relation of dividing between partial types.
	We call $DU$ to this rank.
	We also introduce a new way to define the $D$ rank over formulas as a foundation rank.
	In this way, $SU$, $DU$ and $D$ are foundation ranks based in the relation of dividing.
	We study the properties and the relations between these ranks.

	Next, we discuss the possible definitions of a supersimple type.
	This is a notion that it is not clear in the previous literature. 
	In this paper we give solid arguments to set up a concrete definition of this notion and its properties.
	We also see that $DU$ characterizes supersimplicity, while $D$ not.
\end{abstract}

\section{Conventions}
    We denote by $L$ a language and $T$ a complete theory. 
    We denote by $\mathfrak{C}$ a monster model of $T$,  that is a $\kappa$-saturated and strongly
      $\kappa$-homogeneous model for a cardinal $\kappa$ large enough.
    Models $M,N,\ldots$ are considered elementary substructures of $\mathfrak{C}$ with cardinal less than $\kappa$
      and every set of parameters $A,B,\ldots$ is considered as a subset of $\mathfrak{C}$ with cardinal less than
      $\kappa$.

    We denote by $a,b,\ldots$ tuples of elements of the monster model, possibly infinite (of length less than
      $\kappa$).
    We often use these tuples as ordinary sets regardless of their order. 
    We often omit union symbols for sets of parameters, for example we write $ABc$ to mean $A\cup B\cup c$.
    Given a sequence of sets $( A_i : i\in\alpha )$ we use $A_{<i}$ and $A_{\leq i}$  to denote $\bigcup_{j<i}A_j$
      and $\bigcup_{j\leq i}A_j$ respectively.
    We use $I$ to denote a infinite index set without order and use $O$ for a infinite lineal ordered set.
    Unless otherwise stated, all the types are finitary.
    We use $\ind^d$ and $\ind^f$ to denote the independence relations for non-dividing and non-forking respectively.
    By $dom(p)$ we denote the set of all parameters that appear in some formula of $p$.

\section{The DU-rank}
	We are going to introduce a new rank that we call $DU$. 
	$DU$ is a foundation rank as it is the known rank $SU$ (for their definitions and properties, see for example,
	  Casanovas\cite{Casanovas11}).  
  	We will define the rank $DU$ as the foundation rank of the relation of dividing between pairs $(p,A)$ of partial
	  types and set of parameters satisfying $dom(p)\subseteq A$.
	Similarly we will define  $DU^f$ using the relation of forking, although we will check a little later
	  (Proposition~\ref{DUeqDUf}) that both ranks are the same.

	Let us begin by remembering the notion of foundation rank:
	\begin{definition}
		Let $R$ be a binary relation defined in a set or class of mathematical objects. 
		The foundation rank of $R$ is the mapping $r$ assigning to every element $a$ of the domain of $R$ an ordinal
		  number or $\infty$ according to the following rules:
		\begin{enumerate}
			\item $r(a)\geq 0$.
			\item $r(a)\geq\alpha+1$ if and only if there exists $b$ such that $aRb$ and $r(b)\geq\alpha$.
			\item $r(a)\geq\alpha$ with $\alpha$ a limit ordinal, if and only if $r(a)\geq\beta$ for all
			  $\beta<\alpha$.
		\end{enumerate}
		One defines $r(a)$ as the supremum of all $\alpha$ such that $r(a)\geq\alpha$.
		If such supremum does not exist we set $r(a)=\infty$.
	\end{definition}

    Now, we define $DU$ and $DU^f$ and we will check that really $DU$ does not depend of the set of  parameters. 
    We denote provisionally by  $DU(p,A)$ the $DU$ rank of the pair $(p,A)$.
	\begin{definition}
	    $DU$ and $DU^f$ are the foundation ranks of the following relations $R_d$ and $R_f$:
	    \begin{itemize}
			\item $(p(x),A)R_d(q(x),B)\text{ if and only if }p(x)\subseteq q(x)\text{ and }q\text{ divides over }A$
			\item $(p(x),A)R_f(q(x),B)\text{ if and only if }p(x)\subseteq q(x)\text{ and }q\text{ forks over }A$
		\end{itemize}	
	    where $p$ is a partial type over $A$ and $q$ is a partial type over $B$.
	\end{definition}

	\begin{remark}
		It is immediate to verify by induction that both ranks are invariant under conjugation (automorphism). 
	\end{remark}

	\begin{lemma}\label{IncDiv}
		Let $p(x)$ be a partial type dividing over $A$.
		Let $B\supseteq A$.
		Then, there exists $f\in Aut(\mathfrak{C}/A)$ such that $p^f$ divides over $B$.
	\end{lemma}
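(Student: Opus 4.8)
The plan is to trace the dividing of $p$ back to a single formula and then transport the witnessing indiscernible sequence so that it becomes indiscernible over the larger set $B$, without disturbing the data recorded over $A$. First I would use the definition of dividing for partial types: since $p$ divides over $A$, there is a formula $\varphi(x,c)$ such that $p\vdash\varphi(x,c)$ and $\varphi(x,c)$ divides over $A$. Unwinding the definition of dividing for a single formula, I fix an $A$-indiscernible sequence $(c_i:i<\omega)$ with $c_0=c$ and a natural number $k$ such that $\{\varphi(x,c_i):i<\omega\}$ is $k$-inconsistent.

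The heart of the argument is to replace $(c_i)$ by a sequence that is indiscernible over all of $B$. By the standard extraction of indiscernibles (Ramsey's theorem together with compactness) I would produce a $B$-indiscernible sequence $(c_i':i<\omega)$ that is finitely based on $(c_i)$ over $B$, meaning that every formula over $B$ holding of an increasing tuple of $(c_i')$ already holds of some increasing tuple of $(c_i)$. Applying this to formulas over $A\subseteq B$ and using that $(c_i)$ is already $A$-indiscernible, the two sequences have the same Ehrenfeucht--Mostowski type over $A$; in particular $c_0'\equiv_A c_0=c$.

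From the coincidence of the EM-types over $A$ I would then read off the two facts I need. The $k$-inconsistency of $\{\varphi(x,c_i)\}$ is expressed by the formula $\neg\exists x\,\bigwedge_{l<k}\varphi(x,y_l)$ over $A$, which belongs to the EM-type of $(c_i)$ over $A$ and hence to that of $(c_i')$; thus $\{\varphi(x,c_i'):i<\omega\}$ is again $k$-inconsistent. Since $(c_i')$ is $B$-indiscernible, this exactly witnesses that $\varphi(x,c_0')$ divides over $B$. Finally, because $c_0'\equiv_A c$ and $\mathfrak{C}$ is strongly homogeneous, there is $f\in Aut(\mathfrak{C}/A)$ with $f(c)=c_0'$. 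Then $p^f\vdash\varphi(x,c_0')$, and as $\varphi(x,c_0')$ divides over $B$ we conclude that $p^f$ divides over $B$.

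I expect the only delicate point to be the direction in which the extraction lemma is applied: it controls which $B$-formulas the new sequence is allowed to satisfy, so the transfer of $k$-inconsistency and of $tp(c/A)$ must be phrased negatively (no increasing tuple of $(c_i)$ satisfies the relevant formula, hence neither does $(c_i')$), which is precisely what the equality of EM-types over $A$ packages cleanly. Everything else --- reducing $p$ to a single formula, and observing that a type implying a dividing formula divides --- is routine.
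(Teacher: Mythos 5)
Your argument is correct, and it reaches the conclusion by a genuinely different (though equally standard) route from the paper's. You reduce $p$ to a single dividing formula $\varphi(x,c)$, take the $A$-indiscernible witness $(c_i:i<\omega)$, and use the standard extraction lemma to produce a $B$-indiscernible sequence $(c_i':i<\omega)$ with the same Ehrenfeucht--Mostowski type over $A$; both the $k$-inconsistency (hence dividing of $\varphi(x,c_0')$ over $B$) and the conjugacy $c_0'\equiv_A c$ are then read off from that EM-type, exactly as you describe. The paper instead writes $p(x)=q(x,a)$ and invokes the characterization of dividing by arbitrarily long families of $A$-conjugates $(a_i:i<\lambda)$ with $\{q(x,a_i):i<\lambda\}$ $k$-inconsistent; taking $\lambda$ larger than the number of types over $B$, a pigeonhole argument yields an infinite subfamily with a common type over $B$, and any member of that subfamily gives the desired conjugate. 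The two proofs differ mainly in where the Ramsey/compactness input is spent: the paper hides it inside the ``$k$-inconsistent conjugate family'' characterization of dividing and then needs only pigeonhole, while you invoke the extraction lemma explicitly, which has the advantage of making transparent precisely which data (the EM-type over $A$) survives the passage from $A$ to $B$. One cosmetic remark: your detour through a single formula is unnecessary --- the same extraction applied to the parameter tuple of the whole partial type works verbatim, which is in effect what the paper does.
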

	\begin{proof} 
		Let $p(x)=q(x,a)$ for some  $q(x,y) $ without parameters and $a\subseteq A$.
		For $\lambda$ big enough there exist a set $\{a_i:i\in\lambda\}$ such that $a_i\equiv_A a$ for any
		  $i\in\lambda$ and $\bigcup_{i\in\lambda}q(x,a_i)$ is $k$-inconsistent.
		So, we can choose an infinite subset all having the same type over $B$, witnessing division over $B$.
	\end{proof}
	
	\begin{proposition}
		The rank $DU$ does not depend on the set of parameters $A$.
		That is, if $p(x)$ is a partial type with parameters in $A\cap B$  then $DU(p,A)=D(p,B)$.
		So, from now on we will use the notation $DU(p)$.
	\end{proposition}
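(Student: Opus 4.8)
The plan is to prove, by transfinite induction on $\alpha$, that $DU(p,A)\geq\alpha$ if and only if $DU(p,A')\geq\alpha$ whenever $dom(p)\subseteq A\subseteq A'$; the general statement then follows by inserting $A\cap B$ between the two bases. Indeed, since $dom(p)\subseteq A\cap B\subseteq A$ and $dom(p)\subseteq A\cap B\subseteq B$, applying the monotone case twice gives $DU(p,A)=DU(p,A\cap B)=DU(p,B)$, which is exactly what we want (the statement's ``$D(p,B)$'' being a typo for $DU(p,B)$). Note that $dom(p)\subseteq A\cap B$ is what guarantees that $(p,A\cap B)$ is a legitimate element of the domain of $R_d$.

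So fix $dom(p)\subseteq A\subseteq A'$ and argue the equivalence $DU(p,A)\geq\alpha\Leftrightarrow DU(p,A')\geq\alpha$ by induction on $\alpha$. The cases $\alpha=0$ and $\alpha$ a limit are immediate from the defining clauses of a foundation rank together with the induction hypothesis, so all the content is in the successor step $\alpha=\beta+1$.

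For the implication $DU(p,A')\geq\beta+1\Rightarrow DU(p,A)\geq\beta+1$ I would argue directly, without appealing to the induction hypothesis. Choose a witness $(q,B)$ with $(p,A')R_d(q,B)$ and $DU(q,B)\geq\beta$; thus $p\subseteq q$ and $q$ divides over $A'$. Since $A\subseteq A'$, any sequence of $A'$-conjugates of the parameters of $q$ witnessing its division is also a sequence of $A$-conjugates, so $q$ divides over $A$ as well. Hence the same pair $(q,B)$ satisfies $(p,A)R_d(q,B)$, its rank $DU(q,B)\geq\beta$ is literally unchanged, and we conclude $DU(p,A)\geq\beta+1$.

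The harder implication is $DU(p,A)\geq\beta+1\Rightarrow DU(p,A')\geq\beta+1$, since now one must manufacture division over the \emph{larger} base $A'$, and this is the step I expect to be the main obstacle. Here I would invoke Lemma~\ref{IncDiv} together with the conjugation-invariance of the rank. Take a witness $(q,B)$ with $p\subseteq q$, $q$ dividing over $A$, and $DU(q,B)\geq\beta$. Applying Lemma~\ref{IncDiv} to $q$ (which divides over $A$, with $A'\supseteq A$) yields $f\in Aut(\mathfrak{C}/A)$ such that $q^f$ divides over $A'$. Because $f$ fixes $A\supseteq dom(p)$ pointwise we have $p^f=p\subseteq q^f$, so $(p,A')R_d(q^f,B^f)$; and by the invariance of $DU$ under automorphisms noted in the Remark above, $DU(q^f,B^f)=DU(q,B)\geq\beta$. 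Thus $(q^f,B^f)$ witnesses $DU(p,A')\geq\beta+1$. Transporting a high-rank witness from the small base to the large base in this way is precisely what Lemma~\ref{IncDiv} was designed to make possible.
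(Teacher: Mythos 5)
Your proof is correct and follows essentially the same route as the paper: reduce to the case $A\subseteq A'$, obtain the easy inequality from the observation that dividing over $A'$ implies dividing over $A$, and for the hard direction transport the witness via the automorphism supplied by Lemma~\ref{IncDiv} together with conjugation-invariance of the rank. The only (harmless) difference is that the paper additionally invokes the induction hypothesis to enlarge the witness base from $B^f$ to $A'B^f$, a step your version shows is unnecessary because the definition of $R_d$ imposes no containment between the two parameter sets.
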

	\begin{proof}
		It suffices to prove that given $p$ be a partial type over $A$ and  $A'\supseteq A$ then $DU(p,A)=DU(p,A')$.
		Obviously $DU(p,A)\geq DU(p,A')$. 
		For the proof of $DU(p,A)\leq DU(p,A')$, we show, by induction on $\alpha$, $DU(p,A)\geq\alpha$ implies
		  $DU(p,A')\geq\alpha$. 
		
	    If $DU(p,A)\geq\alpha+1$ then there exists $q\supseteq p$ over $B$ such that $q$ divides over $A$ and
	      $DU(q,B)\geq\alpha$.
		By the previous lemma, there exists an $A$-automorphism $f$ such that $q^f$ divides over $A'$.
		Then, $DU(q^f,B^f)\geq\alpha$. 
		By the induction hypothesis, $DU(q^f,A'B^f)\geq\alpha$.
		As $p\subseteq q^f$ and $q^f$ divides over $A'$, $DU(p,A')\geq\alpha+1$.
	\end{proof}

\section{Properties of the DU-rank}
    We begin by setting some basic properties of $DU$.
    From the first property, it follows that two equivalent partial types have identical $DU$-rank.
    So, the $DU$-rank of a type-definable set makes sense.
	
    \begin{remark}\label{DUBasic}
		Let $p(x),q(x)$ be partial types.
		\begin{enumerate}
			\item If $p\vdash q$ then $DU(p)\leq DU(q)$.
			\item $DU(p\vee q)=\max(DU(p),DU(q))$.
			\item $DU(p)=0$ if and only if $p$ is algebraic.
			\item Two type-definable sets with a definable bijection between them have the same $DU$-rank.
		\end{enumerate}
	\end{remark}
	\begin{proof}
		We Assume that $p$ and $q$ are over the same set of parameters $A$.

		\emph{1}.  
		    We  prove $DU(p)\geq\alpha$ implies $DU(q)\geq\alpha$ by induction on $\alpha$.
		    Assume $DU(p)\geq\alpha+1$.
		    Then, there exists $p_1\supseteq p$ such that $p_1$ divides over $A$ and $DU(p_1)\geq\alpha$.
		    Now $p_1\cup q$ extends $q$, divides over $A$,  and by the inductive hypothesis, 
		      $DU(p_1\cup q)\geq\alpha$.
			Therefore, $DU(q)\geq\alpha+1$.
			
		\emph{2}. 
		    By the previous point, $DU(p\vee q)\geq\max(DU(p),DU(q))$.
		    The other inequality is done by induction on $\alpha$.
		    Assume $DU(p\vee q)\geq\alpha+1$.
 	        There exists $r(x)\supseteq p(x)\vee q(x)$ such that $r$ divides over $A$ and $DU(r)\geq\alpha$.
	        By inductive hypothesis,  as  $r\equiv(p\cup r)\vee(q\cup r)$, $DU(p\cup r)\geq\alpha$ or 
	          $DU(q\wedge r)\geq\alpha$.
		    So, $DU(p)\geq\alpha+1$ or $DU(q)\geq\alpha+1$.
			
		\emph{3}. 
		    $DU(p)\geq1$ iff $p$ has some extension dividing over $A$ iff $p$ is non-algebraic.
		\emph{4}.  
		    Let $p(x),q(y)$ be partial types and let $f:p(\mathfrak{C})\to q(\mathfrak{C})$ be a definable
		      bijection. 
		    We assume $p,q$ are over $A$ and $f$ is defined over $A$. We prove by induction that
		      $DU(p(\mathfrak{C}))\geq\alpha$ implies $DU(q(\mathfrak{C}))\geq\alpha$. 
		    If  $DU(p(\mathfrak{C}))\geq\alpha+1$ there is some $p'(x)\supseteq p(x)$ such that $p'$ divides over
		      $A$ and $DU(p'(\mathfrak{C}))\geq\alpha$.
		    Then $f(p'(\mathfrak{C}))$ is type-definable and, by inductive hypothesis, 
		      $DU(f(p'(\mathfrak{C})))\ge \alpha$. 
		    It is not difficult to prove that if $q'(y)$ type-defines $f(p'(\mathfrak{C}))$ then $q(y)$ divides over
		      $A$.

	\end{proof}

    We are going to see some equivalences for $DU$:
	\begin{proposition}\label{EqvsDU}
		Let $p(x)$ be a partial type over a set of parameters $A$ and $\alpha$ an ordinal. 
		Denote $\mu=\left(2^{|T|+|A|}\right)^+$.
		The following are equivalent:
		\begin{enumerate}
			\item $DU(p)\geq\alpha+1$.

			\item There are $\psi(x,y)\in L$ and a countable sequence $(a_i:i<\omega)$ such that
				\begin{enumerate}
					\item $(a_i:i<\omega)$ is $A$-indiscernible.
					\item $\{\psi(x,a_i):i<\omega\}$ is inconsistent.
					\item For every $i<\omega$, we have $DU(p(x)\cup\{\psi(x,a_i)\})\geq\alpha$.
				\end{enumerate}

			\item There are $\psi(x,y)\in L$ and a number $k\geq2$ such that for every cardinal $\lambda$, 
			  there is a sequence $(a_i:i<\lambda)$ such that
				\begin{enumerate}
					\item $\{\psi(x,a_i):i<\lambda\}$ is $k$-inconsistent.
					\item For every $i<\lambda$, we have $DU(p(x)\cup\{\psi(x,a_i)\})\geq\alpha$.
				\end{enumerate}

			\item There are $\psi(x,y)\in L$, a number $k\geq2$ and a sequence $(a_i:i<\mu)$ such that
				\begin{enumerate}
					\item $\{\psi(x,a_i):i<\mu\}$ is $k$-inconsistent.
					\item For every $i<\mu$, we have $DU(p(x)\cup\{\psi(x,a_i)\})\geq\alpha$.
				\end{enumerate}

			\item There are a partial type $p'(x,y)$ over $\emptyset$ with $|y|\le|A|+|T|$, a number $k\geq2$ and a
			  sequence $(a_i:i<\mu)$ such that
				\begin{enumerate}
					\item The union of any $k$ types in $(p'(x,a_i):i\in\mu)$ is inconsistent.
					\item $p'(x,a_i)\vdash p(x)$ for each $i<\mu$.
					\item $DU(p'(x,a_i))\geq\alpha$ for each $i<\mu$.
				\end{enumerate}
				
			\item There are a partial type $p'(x,y)$ over $\emptyset$ and a sequence 
				$(a_i:i<\omega)$ such that
				\begin{enumerate}
					\item $(a_i:i<\omega)$ is $A$-indiscernible.
					\item $\bigcup_{i\in\omega}p'(x,a_i)$ is inconsistent.
					\item $p'(x,a_i)\vdash p(x)$ for each $i<\omega$.
					\item $DU(p'(x,a_i))\geq\alpha$ for each $i<\omega$.
				\end{enumerate}
				
			\item There are a partial type $p'(x,y)$ over the same set of parameters $A$ and a sequence
			  $(a_i:i<\omega)$ such that
				\begin{enumerate}
					\item $(a_i:i<\omega)$ is $A$-indiscernible.
					\item $\bigcup_{i\in\omega}p'(x,a_i)$ is inconsistent.
					\item $p'(x,a_i)\vdash p(x)$ for each $i<\omega$.
					\item $DU(p'(x,a_i))\geq\alpha$ for each $i<\omega$.
				\end{enumerate}
		\end{enumerate}
	\end{proposition}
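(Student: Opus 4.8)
The plan is to prove the seven conditions equivalent by running a cycle of implications, grouping (2)--(4) (the versions with a single formula $\psi$) and (5)--(7) (the versions with a partial type $p'$) around the defining condition (1). Throughout I would use three routine tools. First, the two standard descriptions of dividing of a partial type $\pi(x,b)$ over $A$: by an $A$-indiscernible sequence $(b_i)$ with $b_0=b$ and $\bigcup_i\pi(x,b_i)$ inconsistent, and by infinitely many $A$-conjugates $(b_i)$ of $b$ for which some finite subtype is $k$-inconsistent. Second, the passage between inconsistency and $k$-inconsistency of a family indexed by an indiscernible sequence (via compactness), together with the fact that such a sequence can be stretched to any length. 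Third, the monotonicity of Remark~\ref{DUBasic}(1) combined with conjugation invariance, which lets me transport the bound $DU\ge\alpha$ along an $A$-automorphism sending $a_0$ to any $a_i\equiv_A a_0$.

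For the single-formula part I would argue $(1)\Rightarrow(2)\Rightarrow(3)\Rightarrow(4)$. From (1) take $q\supseteq p$ dividing over $A$ with $DU(q)\ge\alpha$; writing $q=q(x,b)$ and using an $A$-indiscernible witness to its dividing, compactness yields a single formula $\psi=\bigwedge q_0$ from a finite $q_0\subseteq q$ together with an $A$-indiscernible $(a_i)$ for which $\{\psi(x,a_i)\}$ is inconsistent, while $q\vdash p\cup\{\psi(x,a_0)\}$ gives $DU(p\cup\{\psi(x,a_i)\})\ge DU(q)\ge\alpha$ by Remark~\ref{DUBasic}(1) and conjugation. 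Then $(2)\Rightarrow(3)$ turns inconsistency into $k$-inconsistency (for a fixed $k$) by compactness and stretches the indiscernible sequence to arbitrary $\lambda$, and $(3)\Rightarrow(4)$ is the instance $\lambda=\mu$.

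The partial-type conditions I would attach as $(4)\Rightarrow(5)$ and $(1)\Rightarrow(7)\Rightarrow(6)\Rightarrow(4)$, closing everything once I have $(5)\Rightarrow(1)$. For $(4)\Rightarrow(5)$ set $p'(x,z,y)$ to be $p$ with its parameters $A$ replaced by the variables $z$, conjoined with $\psi(x,y)$, and run the sequence $((\bar a,a_i):i<\mu)$ where $\bar a$ enumerates $A$; since $y$ is finite the parameter length is $\le|A|+|T|$, and $p'(x,\bar a,a_i)=p(x)\cup\{\psi(x,a_i)\}$ carries the $k$-inconsistency and the rank bound over verbatim. For $(1)\Rightarrow(7)$ take $p'(x,y)=q(x,y)$ over $A$ with the indiscernible dividing witness; $(7)\Rightarrow(6)$ absorbs $A$ into the parameter to make $p'$ over $\emptyset$; and $(6)\Rightarrow(4)$ extracts, by compactness along the indiscernible sequence, a single uniform formula $\psi=\bigwedge\Theta$ with $\Theta\subseteq p'$ finite giving $k$-inconsistency, transferring the rank bound through $p'(x,a_i)\vdash p\cup\{\psi(x,a_i)\}$ and Remark~\ref{DUBasic}(1).

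The crux is $(5)\Rightarrow(1)$, where the size of $\mu$ is genuinely used. Since $|y|\le|A|+|T|$, there are at most $2^{|A|+|T|}$ types over $A$ in the variable $y$, and as $\mu=(2^{|A|+|T|})^+$ is regular and strictly larger than this, pigeonhole produces $\mu$-many (in particular infinitely many) indices with a common type over $A$; these give infinitely many $A$-conjugates $a_i\equiv_A a_0$ whose $k$-inconsistency witnesses that $p'(x,a_0)$ divides over $A$. Hence $q:=p\cup p'(x,a_0)$ extends $p$, is equivalent to $p'(x,a_0)$ so divides over $A$, and satisfies $DU(q)=DU(p'(x,a_0))\ge\alpha$, whence $DU(p)\ge\alpha+1$. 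The obstacles I anticipate are exactly this bookkeeping — matching the variable bound in (5) to the counting of types needed for the pigeonhole, and the uniformization to a single formula in $(6)\Rightarrow(4)$ — rather than any deep new idea.
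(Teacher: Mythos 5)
Your chain $(1)\Rightarrow(2)\Rightarrow(3)\Rightarrow(4)\Rightarrow(5)$ and your handling of $(6)$ and $(7)$ essentially reproduce the paper's argument (the paper runs the single cycle $1\Rightarrow2\Rightarrow\cdots\Rightarrow7\Rightarrow1$, while you split off $(1)\Rightarrow(7)\Rightarrow(6)\Rightarrow(4)$ and close $(5)\Rightarrow(1)$ directly; logically both coverings are complete). The problem is the step you call the crux, $(5)\Rightarrow(1)$, which as written has a genuine gap.

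You claim that after the pigeonhole one has ``infinitely many $A$-conjugates $a_i\equiv_A a_0$ whose $k$-inconsistency witnesses that $p'(x,a_0)$ divides over $A$.'' But condition 5(a) only says that the union of any $k$ of the \emph{full types} $p'(x,a_i)$ is inconsistent; by compactness each $k$-subset $S$ is killed by some finite $\Delta_S\subseteq p'$, and this $\Delta_S$ depends on $S$. Dividing of the partial type $p'(x,a_0)$ over $A$ means that some single formula implied by it divides, i.e.\ (by your own first ``routine tool'') one needs a \emph{uniform} finite subtype $\Delta$ such that $\{\bigwedge\Delta(x,a_i)\}_i$ is $k$-inconsistent along infinitely many conjugates. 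An arbitrary infinite conjugate sequence with only per-subset witnesses does not give this: this is exactly the uniformization issue you correctly identify in $(6)\Rightarrow(4)$, where it is resolved by indiscernibility, but in $(5)$ there is no indiscernibility to exploit, and the moment you downgrade from ``$\mu$-many'' to ``in particular infinitely many'' you discard the only resource that could replace it. The cardinal $\mu=(2^{|T|+|A|})^+$ is in the statement precisely so that a partition argument can uniformize the witnesses: colouring $[\mu]^k$ by the (at most $|T|+|A|$ many) finite subsets of $p'$ and applying Erd\H{o}s--Rado yields an infinite subfamily with a constant witness $\Delta$ when $k=2$; for $k\geq3$ even this requires care (Erd\H{o}s--Rado for $k$-tuples wants $\beth_{k-1}(|T|+|A|)^+$), so some uniformization step must be made explicit rather than asserted. (The paper's own route $5\Rightarrow6$, via extracting an indiscernible sequence realizing the EM-type, faces the same difficulty -- inconsistency of the union is an open, not closed, condition on the EM-type -- so you are in good company, but the step does not close as you have written it.) Everything else in your proposal is correct and matches the paper.
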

	\begin{proof} \
	    \item[$1\Rightarrow2$.] 
			If $DU(p)\geq\alpha+1$ there exist $q(x)$ extending $p(x)$, dividing over $A$ with $DU(q)\geq\alpha$.
			Let $\psi(x,a)\in q$ dividing over $A$. So, there exist a sequence $(a_i:i<\omega)$ indiscernible over
			  $A$ with $a_0=a$ such that $\{\psi(x,a_i):i<\omega\}$ is inconsistent. By  point \emph{1} in
			  Remark~\ref{DUBasic}, $DU(p(x)\cup\psi(x,a))\geq\alpha$. 
			By conjugation, conditions $(c)$ is satisfied.

		\item[$2\Rightarrow3$.] 
			We can extend the indiscernible sequence to an indiscernible sequence of length $\lambda$.
			This sequence satisfies the required conditions.

		\item[$3\Rightarrow4$.] 
			Immediate.

		\item[$4\Rightarrow5$.] 
			Let $p(x)=p(x,a)$, where $p(x,y)$ is without parameters and $a$ enumerates $A$ (we assume the variables
			  $y$ in $p(x,y)$ and $\psi(x,y)$ are the same).
			Then  $p'(x,y)=p(x,y)\cup\{\psi(x,y)\}$ and $(b_i=aa_i:i<\mu)$ satisfy \emph{5}.

		\item[$5\Rightarrow6$.] 
		    Choose an infinite subsequence of $(a_i:i\in\mu)$ with all elements having the same type over $A$. 
		    Then apply the standard lemma (Lemma 7.1.1 in Tent, Ziegler\cite{Ziegler}) to  obtain a sequence
		      $(a'_i:i\in\omega)$ indiscernible over $A$ and satisfying the Ehrenfeucht-Mostowski type of the
		      subsequence. 
		    Then  $(a'_i:i\in\omega)$ satisfy the conditions of \emph{6}.
 
		\item[$6\Rightarrow7$.] 
			Immediate.

		\item[$7\Rightarrow1$.] 
			The closure under conjunction of $p'(x,a_0)$ divides over $A$, extends $p(x)$ and has $DU$-rank at least
			  $\alpha$. Therefore $DU(p)\geq\alpha+1$.
	\end{proof}
	
	Now we want to see that $DU$ may be characterized by the existence of certain trees of formula with certain
	  properties.
	\begin{definition}
		We define recursively a rooted tree $T_{\alpha,\lambda}$ for every ordinal $\alpha$ and cardinal $\lambda$:
		\begin{enumerate}
			\item $T_{0,\lambda}$ is a tree with a unique node.
			\item For an ordinal $\alpha+1$, we take $\lambda$ disjoint copies of $T_{\alpha,\lambda}$ and add a new
			  node related with all nodes, that is, a new root.
			\item For a limit ordinal $\alpha$, we take a disjoint union of all trees
			    $\{T_{\beta,\lambda}:\beta\in\alpha\}$ and add a new node related with all nodes, that is, a new
			    root. 
			  The node added in this step will be called a limit node of the tree.
		\end{enumerate}
	\end{definition}
	
	\begin{remark}
	    It is immediate that every $T_{\alpha,\lambda}$ is a tree.
		That is, the binary relation $R$ defined in the tree is a strict partial order (irreflexive and transitive)
		  and for each node $t$, the set $\{s:sRt\}$ is well-ordered.
	\end{remark}
	We use standard tree terminology: 
	  we say that a node $s$ is a child of a node $r$ (or $r$ is the parent of $s$) if $rRs$ and there are no nodes
	  $t$ with $sRt$ and $tRs$. The root of the tree will be the minimum. An  end-node is a node without children.
	We will denote by $F_{\alpha,\lambda}$ the set of parent nodes in $T_{\alpha,\lambda}$ which are not limit.
	$P_{\alpha,\lambda}$ will denote the set of nodes of $T_{\alpha,\lambda}$ which are a child of a non-limit. 
	
	Next lemma characterizes the value of $DU$ using the trees defined above. 
	Compare to the definition of the rank $DD$ in C\'ardenas, Farr\'e\cite{CardenasFarre1}.
	\begin{lemma}\label{DUeqTree}
	    Let $p(x)$ be a partial type over $A$ in $T$, $\alpha$ and ordinal and $\mu=\left(2^{|T|+|A|}\right)^+$.
        The following are equivalent:
		\begin{enumerate}
			\item $DU(p)\geq\alpha$.
			\item There is a sequence of formulas  $( \varphi_s(x,y_n) : s\in F_{\alpha,\mu})$, a sequence of
			  numbers  $( k_s : s\in F_{\alpha,\mu})$ and a sequence of parameters  
			  $( a_s : s\in P_{\alpha,\mu})$  such that
				\begin{enumerate}
					\item For every $s\in F_{\alpha,\mu}$, the set of formulas $\{\varphi_s(x,a_t):t$ is a child
					  of $s\}$ is $k_s$-inconsistent.
					\item For every end-node $s$, the set of formulas $p(x)\cup\{\varphi_{s}(x,a_{r}):tRs,\ r$ a
					  child of $t\}$ is consistent.
				\end{enumerate}
		\end{enumerate}
	\end{lemma}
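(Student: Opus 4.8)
The plan is to prove both directions by induction on $\alpha$, using the recursive structure of the trees $T_{\alpha,\mu}$ and the characterization of $DU$-rank already established in Proposition~\ref{EqvsDU}. The tree $T_{\alpha,\mu}$ is built by attaching $\mu$ copies of $T_{\beta,\mu}$ below a fresh root (for $\alpha=\beta+1$) or all the $T_{\beta,\mu}$ for $\beta<\alpha$ below a fresh limit root (for limit $\alpha$), so the inductive hypothesis applies verbatim to the subtrees hanging off each child of the root.

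\medskip

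For the direction $1\Rightarrow2$, I would argue as follows. The limit case is immediate from clause (3) of the definition of foundation rank and the construction of the limit node: if $DU(p)\geq\alpha$ for every $\alpha<\lambda$ limit, I assemble the trees witnessing each $DU(p)\geq\beta$ as disjoint subtrees below the limit root, and since a limit node contributes no inconsistency condition (it lies in neither $F_{\alpha,\mu}$ nor forces a $k_s$), conditions (a) and (b) are inherited from the pieces. For the successor case $DU(p)\geq\alpha+1$, I invoke Proposition~\ref{EqvsDU}, equivalence $1\Leftrightarrow4$: there are $\psi(x,y)$, a number $k\geq2$, and a sequence $(a_i:i<\mu)$ with $\{\psi(x,a_i):i<\mu\}$ $k$-inconsistent and $DU(p\cup\{\psi(x,a_i)\})\geq\alpha$ for each $i$. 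I set the root's formula $\varphi_{\text{root}}=\psi$, $k_{\text{root}}=k$, attach to the $i$-th child the parameter $a_i$, and apply the induction hypothesis to each type $p\cup\{\psi(x,a_i)\}$ to fill in the subtree $T_{\alpha,\mu}$ hanging below that child. The $k$-inconsistency at the root is condition (a); condition (b) at each end-node is consistency of $p$ together with the formulas read off along the branch, which holds because the deepest formula on the branch below child $i$ already incorporates $\psi(x,a_i)$ via the recursively chosen witness type, so the end-node consistency reduces to the consistency guaranteed inductively for $p\cup\{\psi(x,a_i)\}$.

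\medskip

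For the converse $2\Rightarrow1$, I again induct on $\alpha$, peeling off the root. Given the tree data for $T_{\alpha+1,\mu}$, the root is a non-limit parent node, so it carries a formula $\varphi_{\text{root}}$ and a number $k_{\text{root}}$, and its $\mu$ children carry parameters $(a_s)$ with $\{\varphi_{\text{root}}(x,a_s):s\text{ a child}\}$ being $k_{\text{root}}$-inconsistent. Restricting the tree data to the subtree below each child gives a witness for $DU(p\cup\{\varphi_{\text{root}}(x,a_s)\})\geq\alpha$ by the induction hypothesis (the end-node consistency condition restricted to the subtree, augmented by $\varphi_{\text{root}}(x,a_s)$, is exactly what clause (b) supplies). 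Then Proposition~\ref{EqvsDU} ($4\Rightarrow1$, with the same $\psi=\varphi_{\text{root}}$ and $k=k_{\text{root}}$) yields $DU(p)\geq\alpha+1$. The limit case once more follows directly from the foundation-rank definition, since a branch through a limit node passes through subtrees witnessing $DU(p)\geq\beta$ for cofinally many $\beta<\alpha$.

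\medskip

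The main obstacle I anticipate is bookkeeping the end-node consistency condition (b) so that the formulas read off along a branch line up correctly with the recursive witnesses. Concretely, clause (b) collects $\varphi_s(x,a_r)$ over all $t\,R\,s$ with $r$ a child of $t$ on the branch, so when I descend into the subtree below child $i$ I must verify that the accumulated branch formulas there are precisely those needed for $p\cup\{\psi(x,a_i)\}$, with no off-by-one mismatch between the indexing of parent nodes in $F_{\alpha,\mu}$ and child nodes in $P_{\alpha,\mu}$. I would handle this by carefully tracking that each formula $\varphi_s$ indexed by a parent $s$ is evaluated at the parameter $a_t$ of a \emph{child} $t$ of $s$, so the branch picks up exactly one formula per edge it traverses; the limit nodes contribute no formula and no inconsistency, which is why they are excluded from both $F_{\alpha,\mu}$ and the inconsistency requirement, and verifying this alignment is the only delicate point in an otherwise routine induction.
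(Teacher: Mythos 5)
Your proof is correct and follows exactly the route the paper intends: the paper's own proof is the one-line remark that the lemma ``is easily proved by induction using the equivalence \emph{4} in Proposition~\ref{EqvsDU},'' which is precisely your induction on $\alpha$ with equivalence $1\Leftrightarrow4$ at successor steps and the foundation-rank clause at limits. Your elaboration of the end-node bookkeeping and the role of limit nodes is a faithful filling-in of the same argument.
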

	\begin{proof}
	    It is easily proved by induction using the equivalence \emph{4} in Proposition~\ref{EqvsDU}.
	\end{proof}

    \begin{proposition}
        Let $p(x)$ be a partial type over $A$.
        Then, there exists a set of parameters $B\subseteq A$ such that $|B|\leq|T|^{|DU(p)|}$ and
          $DU(p\upharpoonright B)=DU(p)$.
    \end{proposition}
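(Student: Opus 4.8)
The plan is to split the equality into two inequalities. For every $B\subseteq A$ one has $p\vdash p\upharpoonright B$, so Remark~\ref{DUBasic}(1) already gives $DU(p)\le DU(p\upharpoonright B)$; thus it remains only to produce some $B\subseteq A$ of the required size with $DU(p\upharpoonright B)\le DU(p)$. I would argue by induction on $\alpha=DU(p)$ (with nothing to prove when $DU(p)=\infty$). The governing intuition is that restricting can only raise the rank, so $B$ must contain enough parameters of $p$ to stop the rank from strictly increasing. For the base case $\alpha=0$ the type $p$ is algebraic by Remark~\ref{DUBasic}(3); by compactness a single $\varphi(x,a)\in p$ is already algebraic, and letting $B$ be the finite parameter set of $\varphi$ gives $DU(p\upharpoonright B)=0$.

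For the successor step I would read from Proposition~\ref{EqvsDU} precisely how the rank of a restriction can exceed $\alpha$: by items (2)--(7), $DU(p\upharpoonright B)\ge\alpha+1$ is witnessed by a formula $\psi(x,y)$ and a $B$-indiscernible sequence $(a_i:i<\omega)$ with $\{\psi(x,a_i):i<\omega\}$ inconsistent and $DU\big((p\upharpoonright B)\cup\{\psi(x,a_i)\}\big)\ge\alpha$ for every $i$. The goal is to choose $B$ so that any such candidate witness survives when $p\upharpoonright B$ is replaced by the full $p$, that is, so that $DU\big(p\cup\{\psi(x,a_i)\}\big)\ge\alpha$; this would give $DU(p)\ge\alpha+1$, contradicting $DU(p)=\alpha$, so no witness can exist and $DU(p\upharpoonright B)\le\alpha$ follows. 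The only obstruction to a candidate surviving is the family of formulas $\chi(x,c)\in p$ with $c\in A\setminus B$: adjoining them may drop a piece below rank $\alpha$. By Lemma~\ref{DUeqTree} and compactness each such drop is caused by the inconsistency of a single branch of an optimal height-$\alpha$ tree for the piece together with finitely many formulas of $p$, hence by finitely many parameters of $A$. I would absorb all these parameters into $B$ by a recursion mirroring the tree, running the outer loop over the at most $|T|$ choices of $\psi$, descending through the height-$\alpha$ trees, and calling the induction hypothesis on the subordinate pieces, which have rank $<\alpha$.

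For a limit ordinal $\alpha$ I would instead use that $DU(p\upharpoonright B)\le\alpha$ amounts to $DU(p\upharpoonright B)\not\ge\beta+1$ for all $\beta<\alpha$, apply the successor analysis to a cofinal set of such $\beta$, and take the union of the resulting parameter sets. The cardinality bookkeeping is then uniform across cases: the successor recursion has branching $\le|T|$ and depth $\le|\alpha|$ and invokes the hypothesis on pieces of rank $\beta<\alpha$ (each handled within $|T|^{|\beta|}$), while the limit union runs over $\le|\alpha|$ values of $\beta$; in every case a union of at most $|T|^{|\alpha|}$ sets each of size at most $|T|^{|\alpha|}$ keeps $|B|\le|T|^{|\alpha|}$.

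The step I expect to be the crux is the upper bound together with its cardinality control, namely verifying that the constructed $B$ blocks \emph{every} height-$(\alpha+1)$ witness for $p\upharpoonright B$ and not merely the ones enumerated at some stage. The difficulty is that a fixed $\psi$ ranges over unboundedly many parameter tuples $a$, so a literal pruning would enlarge $B$ without bound; the way to remain within $|T|^{|\alpha|}$ is to exploit $B$-indiscernibility together with the bounded-size witnessing types of Proposition~\ref{EqvsDU}(5)--(7), which reduce the analysis at each level to boundedly many cases. A harmless cosmetic point is that the base case produces a finite $B$ rather than one of size $\le|T|^{0}$, which is absorbed into the stated bound.
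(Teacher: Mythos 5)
Your reduction to the single inequality $DU(p\upharpoonright B)\le DU(p)$ via Remark~\ref{DUBasic}(1) is correct, and you have correctly located the crux; but the crux is not resolved, and the inductive scheme you build around it would not close. Two concrete problems. First, the recursion ``calls the induction hypothesis on the subordinate pieces'', but those pieces are types of the form $p\cup\{\psi(x,a_i)\}$ over $Aa_i$, where the tuples $a_i$ belong to a hypothetical witness for $DU(p\upharpoonright B)\ge\alpha+1$ --- a witness that only materializes after $B$ has been fixed. So $B$ must be chosen before the objects to which you want to apply the induction hypothesis exist; enumerating concrete witnesses and pruning them cannot terminate in a bounded set (as you yourself observe), and the appeal to ``$B$-indiscernibility together with the bounded-size witnessing types'' is a hope, not an argument. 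Second, the successor/limit case split on $\alpha=DU(p)$ is spurious --- what must be excluded is always $DU(p\upharpoonright B)\ge DU(p)+1$, a successor condition --- and the limit-case reformulation ``$DU(p\upharpoonright B)\le\alpha$ amounts to $DU(p\upharpoonright B)\not\ge\beta+1$ for all $\beta<\alpha$'' is false as stated (taking $\beta=0$ it would force the rank to be $0$).

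The missing idea is to turn the \emph{entire} witness into variables at once instead of descending through it level by level. Set $\alpha=DU(p)+1$ and use Lemma~\ref{DUeqTree}: $DU(q)\ge\alpha$ holds iff for some labelling $(\overline{\varphi},\overline{k})$ of the tree $T_{\alpha,\mu}$ the conditions \emph{(a)} and \emph{(b)} can be satisfied, where the parameters $(a_s:s\in P_{\alpha,\mu})$ are now treated as a tuple of variables $(y_s:s\in P_{\alpha,\mu})$; conditions \emph{(a)} and \emph{(b)} then become a type $\Sigma_{q,\overline{\varphi},\overline{k}}$ in these variables (consistency of $p(x)$ with a branch is expressed by the conjunction of ``$\exists x$'' of each finite part). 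Since $DU(p)<\alpha$, each $\Sigma_{p,\overline{\varphi},\overline{k}}$ is inconsistent; compactness extracts an inconsistent finite subset, which mentions only finitely many formulas of $p$, hence a finite $A_{\overline{\varphi},\overline{k}}\subseteq A$ with $\Sigma_{p\upharpoonright A_{\overline{\varphi},\overline{k}},\overline{\varphi},\overline{k}}$ already inconsistent. Taking $B$ to be the union over the boundedly many labellings, and using that $\Sigma_{q,\overline{\varphi},\overline{k}}$ is monotone in $q$, kills every labelling simultaneously, so $DU(p\upharpoonright B)<\alpha$. This is the paper's argument: a single compactness step with no induction on the rank, and it is exactly where the circularity in your plan disappears, because the witness parameters never have to be named before $B$ is chosen --- they are bound variables of the types $\Sigma$.
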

    \begin{proof}
        We may assume $DU(p)<\infty$ and fix $\alpha=DU(p)+1$. For every partial type $q(x)$ over $A$ consider the
          type $\Sigma_{q,\overline{\varphi},\overline{k}}$ in the variables $(y_s : s\in P_{\alpha,\mu})$ 
          expressing the conditions \emph{(a)} and \emph{(b)} of Lemma~\ref{DUeqTree}.  
        Here $\overline{\varphi}=( \varphi_s(x,y_n) : s\in F_{\alpha,\mu})$ and 
          $\overline{k}=( k_s : s\in F_{\alpha,\mu})$ denote sequences of formulas and numbers and
          $\mu=\left(2^{|T|+|A|}\right)^+$.
        That is, $DD(q)<\alpha$ if and only if for every $\overline{\varphi}$ and $\overline{k}$,
          $\Sigma_{q,\overline{\varphi},\overline{k}}$ is inconsistent.
                
        As $DD(p)<\alpha$, for every $\overline{\varphi}$ and $\overline{k}$, by compactness, there is some finite
          $A_{\overline{\varphi},\overline{k}}\subseteq A$ such that
          $\Sigma_{p\upharpoonright A_{\overline{\varphi},\overline{k}},\overline{\varphi},\overline{k}}$ is
          inconsistent.
        Taking $B=\bigcup_{\overline{\varphi},\overline{k}} A_{\overline{\varphi},\overline{k}}$ we get
          $\Sigma_{p\upharpoonright B,\overline{\varphi},\overline{k}}$ is inconsistent for every
          $\overline{\varphi},\overline{k}$.
        We are using that $p\subseteq q$ implies 
          $\Sigma_{p,\overline{\varphi},\overline{k}}\subseteq \Sigma_{q,\overline{\varphi},\overline{k}}$.
    \end{proof}
    
	\begin{proposition}\label{DUInfty}
    	Let $p(x)$ be a partial type over $A$ such that $DU(p)=\infty$.
    	Then there exists a partial type $q(x)$ such that $p\subseteq q$, $q$ divides over $A$ and
    	  $DU(q)=\infty$.
	\end{proposition}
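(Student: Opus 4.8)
The plan is to argue by contradiction: I will assume that \emph{every} dividing extension of $p$ has ordinal rank (i.e.\ rank $<\infty$) and derive that these ranks are bounded, which forces $DU(p)$ to be an ordinal, contradicting $DU(p)=\infty$. The crucial point is that, for the purpose of computing the rank, only set-many extensions are relevant.

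First I would reduce to dividing extensions of a very concrete shape. Since $DU(p)=\infty$ we have $DU(p)\ge\alpha+1$ for \emph{every} ordinal $\alpha$, and by the equivalence $1\Leftrightarrow2$ of Proposition~\ref{EqvsDU} each such inequality is witnessed by a formula $\psi(x,y)\in L$ and an $A$-indiscernible sequence $(a_i:i<\omega)$ with $\{\psi(x,a_i):i<\omega\}$ inconsistent and $DU(p(x)\cup\{\psi(x,a_i)\})\ge\alpha$ for all $i$. In particular $\psi(x,a_0)$ divides over $A$, so the ``one-formula'' type $p(x)\cup\{\psi(x,a_0)\}$ is a dividing extension of $p$ of rank $\ge\alpha$. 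Thus for every ordinal $\alpha$ there is a dividing extension of $p$ of the special form $p(x)\cup\{\psi(x,a)\}$ with rank $\ge\alpha$, and it suffices to find a single one of this form with rank $\infty$.

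The counting step comes next. Because $DU$ is invariant under conjugation (the Remark following the definition of $DU$) and $p$ is over $A$, the rank $DU(p(x)\cup\{\psi(x,a)\})$ depends only on the pair $(\psi,\operatorname{tp}(a/A))$: if $a\equiv_A a'$, an automorphism in $\operatorname{Aut}(\mathfrak{C}/A)$ taking $a$ to $a'$ fixes $p$ and carries one extension to the other, and dividing over $A$ is preserved as well. Since $\psi$ ranges over the $\le|T|$ formulas of $L$ and $\operatorname{tp}(a/A)$ over the $\le 2^{|T|+|A|}$ types in finitely many variables, the collection of ranks $\{DU(p(x)\cup\{\psi(x,a)\}):\psi\in L,\ \psi(x,a)\text{ divides over }A\}$ has at most $2^{|T|+|A|}$ members; in particular it is a set and not a proper class.

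Finally I would close the argument. If no one-formula dividing extension of $p$ had rank $\infty$, then by the previous paragraph the ranks of all such extensions form a set of ordinals, hence are bounded by some ordinal $\gamma$; but the first step provides, for $\alpha=\gamma+1$, such an extension of rank $\ge\gamma+1$, a contradiction. Therefore some dividing extension $q\supseteq p$ (indeed of the form $p(x)\cup\{\psi(x,a)\}$) satisfies $DU(q)=\infty$. The heart of the matter, and the step I expect to require the most care, is exactly this passage from ``for each $\alpha$ a dividing extension of rank $\ge\alpha$'' to ``a single dividing extension of rank $\infty$'': a priori the dividing extensions form a proper class whose ranks could be cofinal in the ordinals with none equal to $\infty$, and it is the reduction to one-formula extensions together with conjugation-invariance that collapses this to set-many rank values and rules that out.
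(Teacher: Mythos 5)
Your proof is correct. It rests on the same key observation as the paper's proof---that up to conjugation over $A$ there are only boundedly many dividing witnesses (a formula of $L$ together with the type over $A$ of its parameters), which is exactly what defeats the a priori possibility, which you rightly flag as the heart of the matter, that the ranks of dividing extensions are cofinal in the ordinals with none equal to $\infty$---but the endgame is different. The paper keeps the full extensions $p_\alpha$ of rank $\ge\alpha$, uses the counting to arrange (after conjugating over $A$) that cofinally many of them contain one and the same formula dividing over $A$, and then takes $q=\bigcap_\alpha p_\alpha$: this $q$ contains that formula, hence divides over $A$, and by monotonicity (Remark~\ref{DUBasic}, item 1) has rank $\ge\alpha$ for every $\alpha$. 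You instead shrink each witness to the one-formula extension $p(x)\cup\{\psi(x,a)\}$ via the implication $1\Rightarrow 2$ of Proposition~\ref{EqvsDU}, observe that conjugation-invariance of $DU$ makes the rank a function of the pair $(\psi,tp(a/A))$, and conclude that the attainable ordinal ranks of such extensions form a bounded set, which is incompatible with $DU(p)=\infty$ unless one of them is $\infty$. Your route buys a slightly stronger conclusion ($q$ may be taken to be $p$ plus a single dividing formula) and avoids forming an intersection indexed by a proper class of ordinals; the paper's route is shorter in that it needs only the existence of a dividing formula inside each $p_\alpha$ rather than the full equivalence of Proposition~\ref{EqvsDU}. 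Both are valid.
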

	\begin{proof}
    	For each $\alpha$, there is a $p_\alpha$ such that $p_\alpha\vdash\varphi_\alpha$ with $\varphi_\alpha$
    	  dividing over $A$, $p\subseteq p_\alpha$ and $DU(p_\alpha)\geq\alpha$.
		We may assume all formulas $\varphi_{\alpha}$ are conjugate over $A$.
		This is true because there are only boundedly many formulas and boundedly many types over $A$.
		
  		By conjugation over $A$ we may assume all $p_\alpha$ contain a formula that divides over $A$.
		So, $q=\bigcap p_\alpha$ is a partial type dividing over $A$.
		Then, $q(x)$ is a dividing extension of $p(x)$ with $DU(q)=\infty$.
	\end{proof}

\section{Relation between DU and other ranks}
	The $SU$-rank has traditionally been defined as the foundation rank of the forking relation.
	In the same way, we can define the rank $SU^d$ using dividing instead of forking. 
	Namely, $SU^d$ will be the foundation rank of the relation of dividing extension between complete types.
	To avoid confusion we will write $SU^f$ to refer to the ordinary rank $SU$ for forking.
	Obviously $SU^f(p)\geq SU^d(p)$.

	Now, we are going to see that we can define the known $D$-rank (for their definitions and properties, see for
	  example, Casanovas\cite{Casanovas11}) for formulas, as a foundation rank. 
	More precisely, as the foundation rank  of the relation of dividing between pairs $(\varphi,A)$ of formulas and
	  set of parameters satisfying $dom(\varphi)\subseteq A$. Using Lemma~\ref{IncDiv} one can easily show that $D$
	  does not depend on the set of parameters. 
	We can define similarly $D^f$ using the forking relation instead of dividing.
	Later, we will check (Proposition~\ref{DUeqDUf}) that both ranks are the same and therefore $D^f$ does no depend
	  on the set of parameters.

	\begin{definition}
	    $D$, $D^f$, $SU^d$ and $SU^f$ are the foundation ranks of the following relations $R_{dd}$, $R_{df}$,
	      $R_{sd}$ and $R_{sf}$: 
	    \begin{itemize}  
		    \item $(\varphi(x),A)R_{dd}(\psi(x),B)\text{ if and only if }
				  \models\psi\rightarrow\varphi\text{ and }\psi\text{ divides over }A$
			\item $(\varphi(x),A)R_{df}(\psi(x),B)\text{ if and only if }
				  \models\psi\rightarrow\varphi\text{ and }\psi\text{ forks over }A$
			\item $p(x)R_{sd}q(x)\text{ if and only if }q\text{ is a dividing extension of }p$
			\item $p(x)R_{sf}q(x)\text{ if and only if }q\text{ is a forking extension of }p$
		\end{itemize}	  
	    where $\varphi$ is a formula over $A$, $\psi$  is a formula over $B$ and $p$ and $q$ are complete types. 
	\end{definition}

    It is not difficult to verify that this definition of $D$ for formulas coincides with the traditional
      definition.
    For indeed, we can proceed as in Proposition~\ref{EqvsDU}.

	Next remark states well known properties of $SU^f$ (and therefore, of $SU$ in the context of simple
	  theories where $SU^d$ and $SU^f$ coincide). 
	We can check that $SU^d$ satisfy them in any theory. 
	The proofs are similar to the proofs for $DU$ in Remark~\ref{DUBasic}.
	\begin{remark}
		Let $p(x)\in S(A)$ and $q(x)\in S(B)$.
		The rank $SU^d$ satisfies:
		\begin{enumerate}
			\item If $q\subseteq p$ then $SU^d(p)\leq SU^d(q)$.
			\item For every $r$ completation of $p\vee q$, $SU^d(r)\leq\max(SU^d(p),SU^d(q))$.
			\item $SU^d(p)=0$ if and only if $p$ is algebraic.
		\end{enumerate}
	\end{remark}

    It is easy to verify that $D$ and $DU$ coincide for formulas:
	\begin{lemma}
		For every formula $\varphi(x)$, we have $D(\varphi)=DU(\varphi)$.
	\end{lemma}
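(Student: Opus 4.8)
The plan is to prove the two inequalities $D(\varphi)\le DU(\varphi)$ and $DU(\varphi)\le D(\varphi)$ separately, each by induction on the ordinal $\alpha$ bounding the rank; the only genuinely non-formal ingredient will be the monotonicity of $DU$ under logical implication (point \emph{1} of Remark~\ref{DUBasic}) together with the fact that a dividing partial type always contains a dividing formula. Throughout I regard the formula $\varphi$ as the singleton partial type $\{\varphi\}$, so that $DU(\varphi)$ is literally $DU(\{\varphi\})$. Limit stages are immediate from clause \emph{3} of the definition of a foundation rank, so I only discuss successor stages.

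For $D(\varphi)\le DU(\varphi)$ I would show $D(\varphi)\ge\alpha\Rightarrow DU(\varphi)\ge\alpha$. If $D(\varphi)\ge\alpha+1$ there is a formula $\psi$ with $\models\psi\to\varphi$, with $\psi$ dividing over $A$, and $D(\psi)\ge\alpha$; by the inductive hypothesis $DU(\psi)\ge\alpha$. The partial type $q=\{\varphi,\psi\}$ then extends $\{\varphi\}$, is equivalent to $\{\psi\}$ (hence divides over $A$ and has $DU(q)=DU(\psi)\ge\alpha$), so $(\{\varphi\},A)\,R_d\,(q,A)$ witnesses $DU(\varphi)\ge\alpha+1$. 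This direction is essentially bookkeeping, since every formula is already a partial type.

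The substantive direction is $DU(\varphi)\le D(\varphi)$, for which I would prove $DU(\varphi)\ge\alpha\Rightarrow D(\varphi)\ge\alpha$. Suppose $DU(\varphi)\ge\alpha+1$, so there is a partial type $q\supseteq\{\varphi\}$ dividing over $A$ with $DU(q)\ge\alpha$. Exactly as in the step $1\Rightarrow2$ of Proposition~\ref{EqvsDU}, extract a single formula $\psi(x,a)\in q$ that divides over $A$, and set $\psi'=\psi\wedge\varphi$. Then $\models\psi'\to\varphi$; moreover $\psi'$ still divides over $A$, because an $A$-indiscernible sequence witnessing $k$-inconsistency of $\{\psi(x,a_i):i<\omega\}$ also witnesses $k$-inconsistency of $\{\psi'(x,a_i):i<\omega\}$ (conjoining the $A$-formula $\varphi$ changes nothing, as its parameters lie in the base). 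The key point is the rank estimate: since $q\vdash\psi'$, point \emph{1} of Remark~\ref{DUBasic} gives $DU(\psi')\ge DU(q)\ge\alpha$, so collapsing the type $q$ to the single formula $\psi'$ loses no rank. By the inductive hypothesis $D(\psi')\ge\alpha$, and since $\psi'$ divides over $A$ and $\models\psi'\to\varphi$ we conclude $(\{\varphi\},A)\,R_{dd}\,(\psi',B)$, i.e. $D(\varphi)\ge\alpha+1$.

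The main obstacle is precisely this rank-preserving passage from the dividing type $q$ to a dividing formula: naively the formula $\psi\in q$ witnessing dividing could have smaller rank than $q$, and what saves us is that $DU$ is monotone in the \emph{wrong-looking} direction ($q\vdash\psi'$ forces $DU(q)\le DU(\psi')$), so strengthening a type to a formula it implies can only increase the rank. Combining the two inductions yields $D(\varphi)=DU(\varphi)$.
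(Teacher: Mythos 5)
Your proof is correct and follows essentially the same route as the paper: for the substantive inequality $DU(\varphi)\le D(\varphi)$ the paper likewise extracts a dividing formula $\psi$ from the dividing type $q$ (closing $q$ under conjunction), passes to $\varphi\wedge\psi$, and uses point \emph{1} of Remark~\ref{DUBasic} to get $DU(\varphi\wedge\psi)\ge DU(q)\ge\alpha$ before applying the inductive hypothesis. The converse inequality, which you spell out, is the one the paper dismisses as immediate.
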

	\begin{proof}
        We only need to prove $DU(\varphi)\leq D(\varphi)$.
        A proof by induction reduces the problem to show  $DU(\varphi)\geq\alpha+1$ implies
          $D(\varphi)\geq\alpha+1$.
        Assume $\varphi$ is over $A$ and $DU(\varphi)\geq\alpha+1$. 
        Then, there exists a partial type $q$ such that $\varphi\in q$, $q$ divides over $A$ and $DU(q)\geq\alpha$.
        Assuming $q$ closed under conjunction, there exists a formula $\psi\in q$ such that $\psi$ divides
          over $A$. 
        Obviously $\varphi\wedge\psi$ also divides over $A$ and $DU(\varphi\wedge\psi)\geq\alpha$. By the
          induction hypothesis, $D(\varphi\wedge\psi)\geq\alpha$.
        So, $D(\varphi)\geq\alpha+1$.
	\end{proof}

    A variation of the proof above also shows $D^f=DU^f$ for formulas. 
    Now, we are going to prove that $D^f$ and $DU^f$ are the same as $D$ and $DU$ respectively (and therefore do not
      depend on the set of parameters). So, from now on, we will use only $D$ and $DU$.
    \begin{proposition}\label{DUeqDUf}
		Let $p$ a partial type and $\varphi$ a formula both over $A$.
		Then, 
		\begin{enumerate}
			\item $DU(p)=DU^f(p,A)$.
			\item $D(\varphi)=D^f(\varphi,A)$.
		\end{enumerate}
	\end{proposition}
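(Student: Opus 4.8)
The plan is to prove both equalities by the same two-part strategy: an easy inequality coming from the fact that dividing implies forking, and a harder reverse inequality proved by induction, using that every forking is witnessed by a finite disjunction of dividing formulas together with the disjunction property of the rank (Remark~\ref{DUBasic}, point \emph{2}).

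For the easy direction I would first observe that $R_d\subseteq R_f$: if $(p,A)R_d(q,B)$ then $p\subseteq q$ and $q$ divides over $A$, hence $q$ forks over $A$, so $(p,A)R_f(q,B)$. A one-line induction on $\alpha$ shows the foundation rank is monotone under inclusion of relations, giving $DU(p)\le DU^f(p,A)$. The identical containment $R_{dd}\subseteq R_{df}$ yields $D(\varphi)\le D^f(\varphi,A)$.

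For the reverse inequality $DU^f(p,A)\le DU(p)$ I would prove, by induction on $\alpha$, the statement: for every partial type $r$ and every $A'$ with $dom(r)\subseteq A'$, if $DU^f(r,A')\ge\alpha$ then $DU(r)\ge\alpha$ (recalling that $DU$ is parameter-independent by the earlier proposition). The zero and limit cases are immediate. For the successor step, assume $DU^f(p,A)\ge\alpha+1$; then there is $q\supseteq p$ over some $B$ with $q$ forking over $A$ and $DU^f(q,B)\ge\alpha$, so $DU(q)\ge\alpha$ by the induction hypothesis. Since $q$ forks over $A$, by definition $q\vdash\bigvee_{i<n}\chi_i$ with each $\chi_i$ dividing over $A$; hence $q$ is equivalent to $\bigvee_{i<n}(q\cup\{\chi_i\})$, and Remark~\ref{DUBasic}(\emph{2}) (iterated over the finite disjunction) gives $DU(q)=\max_{i<n}DU(q\cup\{\chi_i\})$. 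Choose $i$ with $DU(q\cup\{\chi_i\})\ge\alpha$. Because $q\cup\{\chi_i\}\vdash\chi_i$ and dividing is preserved under logical strengthening, $q\cup\{\chi_i\}$ divides over $A$; it also extends $p$. Thus $q\cup\{\chi_i\}$ is a dividing extension of $p$ of $DU$-rank at least $\alpha$, so $DU(p)\ge\alpha+1$.

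The main obstacle is precisely this successor step, and within it the use of the disjunction property to relocate the rank from the forking type $q$ onto one of the dividing types $q\cup\{\chi_i\}$; the key auxiliary fact is that if a partial type implies a formula dividing over $A$ then the type itself divides over $A$, so that $q\cup\{\chi_i\}$ genuinely witnesses $(p,A)R_d(q\cup\{\chi_i\},\cdot)$. Finally, part \emph{2} is handled by the identical induction with formulas in place of partial types, conjunction in place of union, and $R_{dd},R_{df}$ in place of $R_d,R_f$; alternatively it follows from part \emph{1} together with the already established equalities $D(\varphi)=DU(\varphi)$ and $D^f(\varphi)=DU^f(\varphi)$ for formulas.
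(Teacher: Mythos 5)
Your proposal is correct and follows essentially the same route as the paper: the easy inequality comes from dividing implying forking, and the reverse inequality is proved by induction, writing the forking type $q$ as a finite disjunction of dividing extensions and applying the disjunction property of Remark~\ref{DUBasic}(\emph{2}) to relocate the rank onto one of them. Your deduction of part \emph{2} from part \emph{1} via $D=DU$ and $D^f=DU^f$ on formulas is exactly the paper's argument as well.
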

    \begin{proof}
        To prove \emph{1} it suffices to show that $DU(p)\geq DU^f(p)$. A proof by induction reduces to prove the
          following: $DU^f(p)\geq\alpha+1$ implies  $DU(p)\geq\alpha+1$, assuming it is true for $\alpha$.
        If $DU^f(p)\geq\alpha+1$, there exists $q\supseteq p$ such that $q$ forks over $A$ and $DU^f(q)\geq\alpha$
          and by the induction hypothesis, $DU(q)\geq\alpha$.
        Then, there exists $\{q_i:i\in n\}$ such that $q\equiv\bigvee_i q_i$ with each $q_i$ extending $q$ and
          dividing over $A$.
        Then, $DU(q)=\max\{DU(q_i):i\in n\}$.
        So, for some $q_i$, $DU(q_i)\geq\alpha$ and therefore, $DU(p)\geq\alpha+1$.

        \emph{2} follows from \emph{1}, since  $D^f=DU^f$ for formulas.
	\end{proof}

    From that proposition is immediate deduce that $SU^d(p)\leq SU^f(p)\leq DU(p)$ for any complete type $p$.
    
    $D$ is extended in a standard way to partial types $p$ as follows: 
	    \[D(p)=min\{D(\varphi):\varphi\text{ is a finite conjunction of formulas in }p\}\]
	As $D=DU$ for formulas, it is obvious that $DU(p)\leq D(p)$ for a partial type $p$, 
	  but in some cases they are not equal.
	In the next example we even see how $D$ can be $\infty$ while $DU$ not.
	
    \begin{example}\label{DnotequalDU}
		Let the language contain an infinite set of disjoint unary predicates $\{Q_i:i\in\omega\}$ and binary
		  relations $\{\leq_i:i\in\omega\}$.
		Each $\leq_i$ being a dense linear order without endpoints defined in $Q_i$. 
		Let $p$ denote $\{\neg Q_i(x):i\in\omega\}$.
		Then $DU(p)=1$ while $D(p)=\infty$.
    \end{example}
    \begin{proof}
        As $p$ is not algebraic, $DU(p)\geq1$. 
        Suppose $DU(p)\geq2$. 
        Then, by the equivalence \emph{4} in Proposition~\ref{EqvsDU}, there exist $\varphi(x,y)$ and
          $(a_i:i\in\mu)$ such that for each $i\in\mu$, $DU(p\cup\{\varphi(x,a_i)\})\geq1$ and
          $\{\varphi(x,a_i):i\in\mu\}$ is $k$-inconsistent for some $k$.
        Here $\mu=\left(2^{|T|+|A|}\right)^+$. 
        Any two realizations of $p$ different from $a_i$ have the same type over $a_i$, 
          so any realization of $p$ except maybe $a_i$ satisfy $\varphi(x,a_i)$. 
        This shows that $\{\varphi(x,a_i):i\in\mu\}$ is realized by every realization of $p$, except maybe
          $\{a_i:i\in\mu\}$ and therefore $\{\varphi(x,a_i):i\in\omega\}$ is not $k$-inconsistent. 
        This shows $DU(p)=1$.
        
        For each fine subset $S\subseteq I$, we will check that $D(\bigwedge_{i\in S}\neg Q_i)=\infty$, 
          so $D(p)=\infty$.
        Fix $j\in\omega-S$ and choose $\{a_i,b_i:i\in\omega\}$ in  $Q_j$ such that
          \[a_0<a_1<\ldots<a_n<\ldots<b_n<\ldots<b_1<b_0\]
        Then, the formula $a_n<x<b_n$ divides over $\{a_0b_0,\ldots a_{n-1}b_{n-1}\}$, so there is an infinite
          dividing sequence of formulas and therefore (see 14.3.3 Casanovas\cite{Casanovas11}) 
          $D(\bigwedge_{i\in S}\neg Q_i)=\infty$.
	\end{proof}

	In some cases $DU$ and $SU^d$ coincide for complete types:
	\begin{remark}\label{DUExt}
		Assume $DU$ has extension, i.e. for every partial type $p(x)$  over $A$, 
		  there exists $q(x)\in S(A)$ such that $p\subseteq q$ and $DU(p)=DU(q)$. 
		Then for every complete type $p$ $DU(p)=SU^d(p)$.
	\end{remark}
	\begin{proof}
		We prove that $SU^d(p)\geq DU(p)$ by induction on $\alpha$. 
		Let $p\in S(A)$ such that $DU(p)\geq\alpha+1$. 
		Then, there exists $q$ over $B$ such that $p\subseteq q$, $q$ divides over $A$ and $DU(q)\geq\alpha$.
		By the extension property, there exists $q'\in S(B)$ such that $q\subseteq q'$ and $DU(q')\geq\alpha$.
		By the induction hypothesis, $SU^d(q')\geq\alpha$ and therefore $SU^d(p)\geq\alpha+1$.
	\end{proof}

    In addition to the other mentioned ranks, we are going also to explore the relations with the $DD$-rank defined
      in C\'ardenas, Farr\'e\cite{CardenasFarre1}.
    In that paper we can found a definition of $DD$ from Shelah trees and several equivalences. 
    Here, we define $DD$ by dividing chains of complete types, which is the equivalence that we are going to use.
    \begin{definition}
        Let $p$ be a partial type over $A$.
        A \textbf{dividing chain of partial types of depth $\alpha$ in $p$} is a sequence of
          partial types $(p_i(x):i\in\alpha)$ and a sequence of sets of parameters $(A_i:i\in\alpha)$, each
		  $p_i$ a partial type over $A_i$, $p\subseteq p_0$, $A\subseteq A_0$, $p_0$ divides over $A$ and for
		  every $0<i<\alpha$, $p_{<i}\subseteq p_i$, $A_{<i}\subseteq A_i$ and $p_i$ divides over $A_{<i}$.

        If there is not such dividing chain for $p(x)$ we set $DD(p)=0_+$.	  
        Let $\beta$ the supremum of all possible depths of dividing chains of partial types in $p$
	    If this supremum does not exist we write $DD(p)=\infty$. 
		Otherwise, if $\beta$ is attained we put $DD(p)=\beta_+$ and $DD(p)=\beta_-$ if it is not attained.
		We call $DD(p)$ the \textbf{Dividing Depth of $p$}.

		In C\'ardenas, Farr\'e\cite{CardenasFarre1} is shown that $DD$ does not depend of the set of parameters.  
    \end{definition}

	Ranks $DU$, $SU^d$ and $SU^f$ take the value $\infty$ at the same time:
	\begin{proposition}\label{SU,DU are infinite same time}
		Let $p$ be a partial type. 
		Then, $DD(p)\geq\omega_+$ if and only if $DU(p)=\infty$.
		Moreover, if $p$ is complete, then $SU^d(p)=\infty$, $SU^f(p)=\infty$ are also equivalent to $DU(p)=\infty$.
	\end{proposition}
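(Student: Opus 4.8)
The plan is to reduce everything to one general principle about foundation ranks together with Proposition~\ref{DUInfty}: for any foundation rank $r$ of a relation $R$, if there is an infinite sequence $a_0\mathrel{R}a_1\mathrel{R}a_2\cdots$, then $r(a_0)=\infty$. Indeed, were all $r(a_i)$ ordinals, clause~2 of the rank would force $r(a_i)\geq r(a_{i+1})+1$, a strictly descending sequence of ordinals; so some $r(a_j)=\infty$, and pulling back one step at a time gives $r(a_0)=\infty$. I use this \emph{easy} direction for $R_d$ and for $R_{sd}$, while the \emph{hard} direction is already supplied for $R_d$ by Proposition~\ref{DUInfty}. I also record the monotonicity of dividing: if a partial type $\pi$ divides over $C$ and $\pi\subseteq\pi'$, then $\pi'$ divides over $C$, since $\pi$ entails a formula dividing over $C$ and hence so does $\pi'$.

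First I would show $DU(p)=\infty$ implies $DD(p)\geq\omega_+$, i.e.\ the existence of a dividing chain of depth $\omega$. Starting from $p$ over $A$ with $DU(p)=\infty$ I iterate Proposition~\ref{DUInfty}: having built $p_{i-1}$ over $A_{i-1}$ with $DU(p_{i-1})=\infty$ and $p_{i-1}\supseteq p_{<i}$, I apply the proposition to obtain $p_i\supseteq p_{i-1}$ dividing over $A_{i-1}=A_{<i}$ with $DU(p_i)=\infty$, and set $A_i=A_{i-1}\cup dom(p_i)$ (the base step applies the proposition to $p$ itself). The sequence $(p_i,A_i:i<\omega)$ meets all clauses of a depth-$\omega$ dividing chain, so $DD(p)\geq\omega_+$. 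For the converse, given any dividing chain $(p_i,A_i:i<\omega)$ I pass to the cumulative pairs $s_n=p_{\leq n}$ over $B_n=A_{\leq n}$. Since $p_{n+1}$ divides over $A_{<n+1}=B_n$ and $p_{n+1}\subseteq s_{n+1}$, monotonicity gives that $s_{n+1}$ divides over $B_n$; together with $s_n\subseteq s_{n+1}$ and the opening clause ($p\subseteq p_0$ and $p_0$ divides over $A$) this produces an infinite $R_d$-path $(p,A)\mathrel{R_d}(s_0,B_0)\mathrel{R_d}(s_1,B_1)\cdots$, whence $DU(p)=\infty$ by the principle above. This settles the first biconditional.

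For the \emph{moreover}, the inequalities $SU^d(p)\leq SU^f(p)\leq DU(p)$ recorded after Proposition~\ref{DUeqDUf} immediately yield $SU^d(p)=\infty\Rightarrow SU^f(p)=\infty\Rightarrow DU(p)=\infty$, so it remains to prove $DU(p)=\infty\Rightarrow SU^d(p)=\infty$ for complete $p$. The naive idea---complete the partial dividing chain of the first part into a chain of complete types---would require an extension property that $DU$ need not have, and this is the main obstacle. I would sidestep it by realizing the chain with a single element: build, as above, an increasing sequence of partial types $s_0\subseteq s_1\subseteq\cdots$ with $p\subseteq s_0$ and each $s_{n+1}$ dividing over $C_n=\bigcup_{m\leq n}dom(s_m)$; realize the (consistent, being an increasing union of consistent sets) union by one element $c$; and set $r_n=\mathrm{tp}(c/C_n)$ and $r_{-1}=\mathrm{tp}(c/A)=p$. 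Each $r_{n+1}$ extends $r_n$ and contains $s_{n+1}$, so by monotonicity $r_{n+1}$ divides over $C_n$, i.e.\ $r_{n+1}$ is a dividing extension of the complete type $r_n$. Thus $p=r_{-1}\mathrel{R_{sd}}r_0\mathrel{R_{sd}}r_1\cdots$ is an infinite $R_{sd}$-chain of complete types, and the foundation-rank principle gives $SU^d(p)=\infty$, closing the loop among $SU^d$, $SU^f$ and $DU$.

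The delicate point throughout is to avoid confusing dividing over a parameter set with dividing over a larger one; monotonicity of dividing (preservation under strengthening the type, not enlarging the base) is exactly what makes the cumulative construction in the second paragraph and the realization trick in the last paragraph go through, and I expect the completion step $DU(p)=\infty\Rightarrow SU^d(p)=\infty$ to be where the real work lies.
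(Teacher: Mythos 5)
Your proposal is correct and follows essentially the same route as the paper: the first equivalence via the standard foundation-rank/descending-ordinal principle together with iterated applications of Proposition~\ref{DUInfty}, and the ``moreover'' part by realizing the union of the resulting chain of partial types with a single element to obtain a dividing chain of complete types. Your write-up merely makes explicit the steps the paper leaves as ``standard'' or ``easy to check.''
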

	\begin{proof}
        The first equivalence has a standard proof based in properties of foundations ranks (see for example
          Remark~\emph{13.6} in Casanovas\cite{Casanovas11}) and Proposition~\ref{DUInfty}.

        For the second equivalence, assume $DU(p)=\infty$. 
        By Lemma~\ref{DUInfty}, we can build a dividing chain of partial types $(p_i:i\in\omega)$ and sets of
          parameters $(A_i:i\in\omega)$ such that $p=p_0$, $A=A_0$ and for every $i\in\omega$, 
          $p_i\subseteq p_{i+1}$, $A_i\subseteq A_{i+1}$, $p_{i+1}$ divides over $A_i$.
        Let $a\models\bigcup_{i\in\alpha}p_i$.
        Then $(tp(a/A_i):i\in\omega)$ is a dividing chain of complete types. 
        It is easy to check by induction over $\alpha$ that for every $i\in\omega$, $SU^d(a/A_i)\geq\alpha$.
	\end{proof}

    In the following two results, in order to compare $DD$ with other ranks, we will suppress the subscripts in the values of $DD$. 

	\begin{proposition}\label{DD=DU when finite}
		Let $p$ a partial type with $DD(p)$ finite, then $DD(p)=DU(p)$.
		Moreover, if $p$ is complete, then $DD(p)=SU^d(p)=SU^f(p)=DU(p)$.
	\end{proposition}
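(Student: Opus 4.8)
The plan is to prove the finite case $DD(p)=DU(p)$ by induction on $n=DD(p)$, and then derive the ``moreover'' part for complete types by sandwiching all four ranks between the two equal extremes. The inequality $DU(p)\geq DD(p)$ should come essentially for free: a dividing chain of partial types of depth $n$ directly witnesses, stepwise, that $DU(p)\geq n$, since each $p_i$ extends $p_{<i}$ and divides over $A_{<i}$, which is exactly the recursive clause in the definition of the foundation rank $DU$. So the real content is the reverse inequality $DU(p)\leq DD(p)$ under the hypothesis that $DD(p)$ is finite.

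For the reverse inequality I would argue contrapositively, or rather by a careful induction on the (finite) value of $DD(p)$. Suppose $DD(p)=n$ is finite but $DU(p)\geq n+1$. By the recursive definition of $DU$, there is a partial type $q\supseteq p$ dividing over $A$ with $DU(q)\geq n$. The idea is that dividing of $q$ over $A$ together with $DU(q)\geq n$ should let me manufacture a dividing chain in $p$ of depth strictly greater than $DD(p)$, contradicting the supremum. Concretely, I would like to say that $DU(q)\geq n$ forces $DD(q)$ to be at least $n$ (here I need the induction hypothesis, applied to $q$ whose dividing depth is controlled), and then prepend the single dividing step $p\subseteq q$, $q$ divides over $A$, to a depth-$n$ dividing chain witnessing $DD(q)\geq n$, producing a chain of depth $n+1$ in $p$. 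The bookkeeping with the parameter sets $A_i$ — ensuring $A\subseteq A_0$, $A_{<i}\subseteq A_i$, and that the divisions are over the correct cumulative parameter sets — is where care is needed, but Lemma~\ref{IncDiv} lets me enlarge parameter sets while preserving dividing, so the chain can be reassembled over the right bases.

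\textbf{The main obstacle} I expect is precisely the matching of the recursive, ``one extension at a time'' structure of the foundation rank $DU$ against the cumulative, ``union of all earlier types'' structure of a dividing chain in the $DD$ definition. In $DU$ each step only requires $q$ to divide over the immediately previous base $A$, whereas a $DD$-chain requires $p_i$ to divide over the entire union $A_{<i}$ of all earlier bases. Aligning these so that the chain I build genuinely satisfies the $DD$ definition — and in particular that the cumulative dividing conditions survive — is the delicate point; Proposition~\ref{DUInfty} and Lemma~\ref{IncDiv} are the tools that make this alignment possible, the former controlling the infinite case (which finiteness of $DD(p)$ rules out, via Proposition~\ref{SU,DU are infinite same time}) and the latter handling parameter enlargement.

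Finally, for the complete-type statement, once $DD(p)=DU(p)$ is established I would invoke the chain of inequalities already noted in the excerpt, namely $SU^d(p)\leq SU^f(p)\leq DU(p)$, together with the observation that a $DD$-chain of complete types realized by a single $a$ forces $SU^d(a/A_i)$ to grow stepwise (as in the proof of Proposition~\ref{SU,DU are infinite same time}), giving $SU^d(p)\geq DD(p)$. Combining $DD(p)\leq SU^d(p)\leq SU^f(p)\leq DU(p)=DD(p)$ collapses all four ranks to the common finite value, which completes the argument.
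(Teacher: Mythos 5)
Your proposal is correct and follows essentially the same route as the paper: both directions of $DD(p)=DU(p)$ come from translating between dividing chains and the recursive clause of the foundation rank (the paper phrases the direction $DU(p)\geq n\Rightarrow DD(p)\geq n$ as a direct construction of a chain rather than your contradiction-plus-induction, but the content is the same, the key point in both being that the parameter sets in the $DU$ recursion may be taken increasing so that ``divides over the previous base'' becomes ``divides over the cumulative union''). The ``moreover'' part is handled exactly as you describe: realize the chain by a single element to get $SU^d(p)\geq DD(p)$ and sandwich with $SU^d\leq SU^f\leq DU$.
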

	\begin{proof}
		If $DU(p)\geq n$ we can build a dividing chain of partial types of length $n$, $(p_i:i<n)$ and $(A_i:i<n)$.
		So, $DD(p)\geq n$. 
		The converse is immediate.
		If moreover $p$ is complete, let $a\models\bigcup_{i<n}p_i$.
		Then, the sequence $(tp(a/A_i):i<n)$ forms a dividing chain of complete types witnessing $SU^d(p)\geq n$.
	\end{proof}

    The inequality $SU^f(p)\leq D(p)$ is well known (see Kim\cite{Kim}) but a standard proof needs simplicity of the
      theory and does not work in full generality. 
    Actually, it is true in general:
	\begin{remark}\label{SUd_SUf_DU}
		Let $p$ be a partial type. 
		Then, $DD(p)\leq DU(p)\leq D(p)$.
		Moreover if $p$ is a complete type, $DD(p)\leq SU^d(p)\leq SU^f(p)\leq DU(p)\leq D(p)$.
	\end{remark}
	\begin{proof}
		It is immediate by Proposition~\ref{DUeqDUf} that $SU^d(p)\leq SU^f(p)\leq DU(p)\leq D(p)$.
		And by Proposition~\ref{SU,DU are infinite same time} and Proposition~\ref{DD=DU when finite} we obtain
		  $DD(p)\leq SU^d(p)$ and $DD(p)\leq DU(p)$.
	\end{proof}

    We are going to use a result about $DD$ in C\'ardenas, Farr\'e\cite{CardenasFarre1} to prove that $DU$,
      $SU^d$ and $SU^f$ have a bounded number of different values.
    We take next lemma from Proposition~\emph{3.10} in C\'ardenas, Farr\'e\cite{CardenasFarre1}:
    \begin{lemma}
        Let $p(x)$ be a partial type over $A$. 
        Then, there exists a set of parameters $B\subseteq A$ such that $|B|\leq|T|^{|DD(p)|}$ and
          $DD(p\upharpoonright B)=DD(p)$.
    \end{lemma}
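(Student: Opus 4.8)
The plan is to mirror the proof of the preceding restriction lemma for $DU$, replacing its tree characterisation by the corresponding Shelah-tree description of $DD$ from C\'ardenas, Farr\'e\cite{CardenasFarre1}. I may assume $DD(p)<\infty$, since otherwise the bound carries no information and one takes $B=A$. One inequality is immediate: because $DD$ does not depend on the set of parameters, $DD(p\upharpoonright B)$ may be computed with $A$ as base set, and since $p\upharpoonright B\subseteq p$ every dividing chain in $p$ over $A$ is also a dividing chain in $p\upharpoonright B$ over $A$; hence $DD(p\upharpoonright B)\geq DD(p)$ for every $B\subseteq A$, and it suffices to find $B$ of the required size with $DD(p\upharpoonright B)\leq DD(p)$.

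The heart of the argument is to encode the existence of a dividing chain as a consistency statement. For each ordinal $\gamma$ and each choice of data $\overline{\varphi}=(\varphi_i:i<\gamma)$ and $\overline{k}=(k_i:i<\gamma)$ (one formula and one number per level) I would introduce a type $\Sigma_{q,\gamma,\overline{\varphi},\overline{k}}$ in parameter variables asserting the existence of a depth-$\gamma$ dividing chain in $q$ witnessed by these data: the parameter tuples of the sets $A_i$, the $A$-indiscernible sequences witnessing dividing at each level, the relevant $k_i$-inconsistencies, and $q\subseteq p_0$. Just as in equivalence \emph{4} of Proposition~\ref{EqvsDU} and in Lemma~\ref{DUeqTree}, the existence of such a chain in $q$ is equivalent to consistency of $\Sigma_{q,\gamma,\overline{\varphi},\overline{k}}$ for some $\overline{\varphi},\overline{k}$; and by normalising the witnesses to indiscernible sequences one may use the same formula and number at all nodes of a given level, so the admissible data range over a set of size at most $|T|^{|\gamma|}$.

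With this in hand the conclusion follows as for $DU$. Let $\gamma_0$ be the least ordinal admitting no depth-$\gamma_0$ dividing chain in $p$, so $\gamma_0=\beta+1$ if $DD(p)=\beta_+$ and $\gamma_0=\beta$ if $DD(p)=\beta_-$. Then $\Sigma_{p,\gamma_0,\overline{\varphi},\overline{k}}$ is inconsistent for all data, and by compactness each inconsistency already uses finitely many formulas of $p$, with parameters in a finite set $A_{\overline{\varphi},\overline{k}}\subseteq A$. Put $B=\bigcup_{\overline{\varphi},\overline{k}}A_{\overline{\varphi},\overline{k}}$; since $\Sigma_{q,\gamma_0,\overline{\varphi},\overline{k}}$ is monotone in $q$ and $p\upharpoonright A_{\overline{\varphi},\overline{k}}\subseteq p\upharpoonright B$, inconsistency propagates, so $p\upharpoonright B$ admits no depth-$\gamma_0$ chain and $DD(p\upharpoonright B)\leq DD(p)$. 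With the reverse inequality this gives equality, and $|B|\leq|T|^{|\gamma_0|}=|T|^{|DD(p)|}$ because $|\gamma_0|=|DD(p)|$ and each of the at most $|T|^{|\gamma_0|}$ data contributes finitely many parameters.

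The main obstacle is this encoding step: writing $\Sigma_{q,\gamma,\overline{\varphi},\overline{k}}$ so that it faithfully captures depth-$\gamma$ dividing chains while keeping the admissible data bounded by $|T|^{|\gamma|}$ rather than by the far larger number of nodes of the underlying tree. This reduction is exactly the indiscernibility normalisation underlying Proposition~\ref{EqvsDU}, and is where the Shelah-tree machinery of \cite{CardenasFarre1} does the real work; the bookkeeping around the $+/-$ subscripts and the limit thresholds is then routine.
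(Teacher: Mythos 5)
The paper does not prove this lemma at all: it is imported verbatim as Proposition~3.10 of C\'ardenas--Farr\'e \cite{CardenasFarre1}, so the only in-paper model is the analogous restriction proposition for $DU$, whose proof (via $\Sigma_{q,\overline{\varphi},\overline{k}}$, compactness on finite subsets of $A$, and a union over the boundedly many data) your argument transposes faithfully to $DD$, including the correct identification of the one genuinely delicate point, namely normalising the witnessing data to one formula and one integer per level so that only $|T|^{|\gamma_0|}$ many pairs $(\overline{\varphi},\overline{k})$ occur. Your proposal is correct and takes essentially the same approach as the paper's (intended) proof.
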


	\begin{proposition}\label{DUMax}
	    There is some ordinal $\alpha$ such that $DU(p)\geq\alpha$ implies $DU(p)=\infty$.
	\end{proposition}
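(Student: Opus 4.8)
The plan is to take $\alpha$ to be one more than a bound on the set of ordinals $\{DU(p) : DU(p) < \infty\}$, so that the whole task reduces to showing this set is bounded. The one genuine difficulty is that the obvious reduction to a small parameter set, namely the earlier proposition giving $|B| \leq |T|^{|DU(p)|}$, is circular for this purpose: its bound on $|B|$ depends on the very rank we are trying to bound. The way around this is to route everything through $DD$, whose reduction lemma (the one stated just above) combines with the $DU$–$DD$ infinity correspondence to produce a bound on the parameter set that is \emph{uniform}, i.e.\ independent of $p$.

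Concretely, I would first observe that it suffices to produce a single ordinal $\alpha_0$ with $DU(p) \leq \alpha_0$ for every $p$ with $DU(p) < \infty$; then $\alpha = \alpha_0 + 1$ works. So fix $p$ with $DU(p) < \infty$. By Proposition~\ref{SU,DU are infinite same time}, $DU(p) \neq \infty$ forces $DD(p) < \omega_+$, so the ordinal underlying $DD(p)$ is at most $\omega$ and in particular $|DD(p)| \leq \aleph_0$. Feeding this into the reduction lemma above yields a set $B$ with $|B| \leq |T|^{|DD(p)|} \leq |T|^{\aleph_0} =: \lambda$ and $DD(p \upharpoonright B) = DD(p)$. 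The crucial gain is that $\lambda$ no longer depends on $p$.

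Next I would transfer this back to $DU$. Since $DD(p \upharpoonright B) = DD(p) < \omega_+$, Proposition~\ref{SU,DU are infinite same time} applied to $p \upharpoonright B$ gives $DU(p \upharpoonright B) < \infty$; and since $p \vdash p \upharpoonright B$, point~\emph{1} of Remark~\ref{DUBasic} gives $DU(p) \leq DU(p \upharpoonright B)$. Thus the rank of every $p$ with $DU(p) < \infty$ is dominated by the (finite-valued) rank of a partial type over a set of size $\leq \lambda$. Since $DU$ is invariant under conjugation, it is enough to count over a fixed set of size $\lambda$, of which there are at most $2^{\lambda + |T|} = 2^{\lambda}$ partial types for each arity of $x$; hence their $DU$-values below $\infty$ form a set of ordinals, and I let $\alpha_0$ be its supremum. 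Then $DU(p) \leq DU(p \upharpoonright B) \leq \alpha_0$ for all $p$ with $DU(p) < \infty$, and $\alpha = \alpha_0 + 1$ proves the statement.

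I expect the main obstacle to be precisely the uniformity issue flagged at the outset: one must resist applying the $DU$-reduction proposition directly and instead use the $DD$-reduction lemma, whose size bound becomes $p$-independent only after Proposition~\ref{SU,DU are infinite same time} pins $DD(p)$ below $\omega_+$. A secondary and routine point is the counting of types over a fixed set of size $\lambda$ across all finite arities of $x$; this contributes only a further factor of $\aleph_0$ and does not affect the final bound.
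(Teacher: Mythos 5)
Your proof is correct and follows essentially the same route as the paper: it reduces to $DD$ via Proposition~\ref{SU,DU are infinite same time}, applies the $DD$-reduction lemma to obtain a parameter set of size at most $|T|^{\aleph_0}$ independent of $p$, transfers back to $DU$ using monotonicity, and concludes by conjugation-invariance and a counting argument. The circularity you flag (why one must use the $DD$-reduction lemma rather than the $DU$-reduction proposition) is exactly the point the paper's proof relies on, and your write-up makes it explicit.
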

	\begin{proof}
		Observe that, as $DU$ takes the same values over conjugate sets of parameters and there are boundedly many
		  non-conjugate sets of parameters of size  $\leq |T|^{\aleph_0}$, the $DU$-values on types over a set
		  of parameters of size  $\leq |T|^{\aleph_0}$ is upper bounded.
		By Proposition~\ref{SU,DU are infinite same time} and the previous lemma, for any partial type $p$ over $A$
		  with $DU(p)<\infty$ there is some $B\subseteq A$ with $|B|\leq |T|^{\aleph_0}$ and 
		  $DU(p)\leq DU(p\upharpoonright B)<\infty$. 
		Therefore the set of non-infinite values taken by $DU$  is bounded. 
	\end{proof}

    \begin{remark}
        The same $\alpha$ as in Proposition~\ref{DUMax} satisfies that 
        $SU^d(p)\geq\alpha$ implies $SU^d(p)=\infty$ and $SU^f(p)\geq\alpha$ implies $SU^f(p)=\infty$.
    \end{remark}
	
	We have seen so far that $D=D^f$ and $DU^d=DU^f$, but we do not know if, in general, $SU^d=SU^f$ or $SU^f=DU$.
	We know that the three ranks are equal for finite values and the value $\infty$, but in all intermediate cases,
	  when $DD(p)=\omega_-$, we do not have the answer. 
	So, we have these two open questions:
	\begin{question}
	    Is there a complete type $p$ such that $SU^d(p)<SU^f(p)$?
	\end{question}

	\begin{question}
	    Is there a complete type $p$ such that $SU^f(p)<DU(p)$?
	\end{question}

    In C\'ardenas, Farr\'e\cite{CardenasFarre3} we prove that in an $NTP_2$ theory, for any stable complete type
      $p$, $SU^d(p)=SU^f(p)$.
    We also prove that if $SU^d$ has extension then $SU^d=SU^f$.

\section{Supersimple types}
	The following are two equivalent definitions of a simple type (see  in Hart, Kim, Pillay\cite{HKP} and
	  Chernikov\cite{Chernikov}).
	\begin{definition}
    	Let $p(x)$ be a partial type over $A$. 
    	$p$ is \textbf{simple} if and only if one of the following two equivalent conditions are satisfied:
    	\begin{enumerate}
            \item for every $B\supseteq A$ and every realization $a$ of $p(x)$, there is some $B_0\subseteq B$ with
              $|B_0|<|T|^+$ such that $a\ind^d_{B_0}B$.
    	    \item for every $B\supseteq A$ and every realization $a$ of $p(x)$, there is some $B_0\subseteq B$ with
    	      $|B_0|<|T|^+$ such that $a\ind^d_{AB_0}B$.
    	\end{enumerate}
	\end{definition}
	
	From this, one might think in defining a supersimple type in two different ways, replacing in both definitions
	  the bound $|T|^+$ by $\aleph_0$.
	In fact, in Hart, Kim, Pillay\cite{HKP}, they suggest to define a supersimple type through the first
	  alternative, although they do not develop the implications of this possibility.
		
	We will see through the Example~\ref{exampleSupersimple} that these possible definitions are not equivalent and
	  that the first one depends on the set of parameters while by Corollary~\ref{EqsSupersimple} the second not.
	In addition, in this example we show also a superstable complete type not satisfying the first possible
	  definition of supersimple.	
	All of that indicate us that the correct way of defining a supersimple type will be the second:
	\begin{definition}
		Let $p(x)$ be a partial type over $A$.
		$p$ is \textbf{supersimple} if and only if for every $B\supseteq A$ and every realization $a$ of
		  $p(x)$, there exists a finite set $B_0\subseteq B$ with $a\ind^d_{AB_0}B$.
	\end{definition}

	\begin{remark}
	    It is obvious that the discarded definition of supersimple type implies our definition of supersimple type.
	\end{remark}
	
	The notion of supersimple type satisfies the following expected properties:
	\begin{remark}\label{basicPropSS}
		The following are satisfied:
		\begin{enumerate}
			\item If $p(x)$ is supersimple and $p(x)\subseteq q(x)$, then $q(x)$ is supersimple.
			\item If $p(x,y)$ is supersimple, then the type $\exists yp(x,y)$ is supersimple.
			\item $tp(ab/A)$ is supersimple if and only if $tp(a/A)$ and $tp(b/Aa)$ are supersimple. 
				More generally, $tp((a_i:i\in n)/A)$ is supersimple if and only if $tp((a_i:i\in n)/A)$ is
				  supersimple.
			\item Assume that $x$ and $y$ are disjoint. 
			  $p(x)$ and $q(y)$ are supersimple if and only if $p(x)\cup q(y)$ is supersimple.
			\item Let $p(x)$ be over $A$. 
			  Then $p$ is supersimple if and only if every $q(x)\in S(A)$ extending $p(x)$ is supersimple.
 			\item $p_1(x), p_2(x)$ are supersimple if and only if $p_1\vee p_2$ is supersimple.
		\end{enumerate}
	\end{remark}
    \begin{proof} 
        \emph{1} is obvious assuming $p$ and $q$ are over the same set of parameters.
        
        \emph{2}. 
            Let $a\models\exists yp(x,y)$, then  $ab\models p(x,y)$ for some $b$. Let $B\supseteq A$. 
            As $p(x,y)$ is supersimple, there exists a finite $B_0\subseteq B$ with  $ab\ind^d_{AB_0}B$. 
            So, $a\ind^d_{AB_0}B$.
        
        \emph{3}$\Rightarrow$). 
        	$tp(a/A)=\exists ytp(ab/A)$ and $tp(b/A)=\exists xtp(ab/A)$ are supersimple by \emph{2} and
        	  therefore by \emph{1} $tp(b/Aa)$ is also supersimple.
        
        \emph{3}$\Leftarrow$).
        	Let $B\supseteq A$ and $a'b'\equiv_Aab$.
            By the first condition, as $a'\equiv_Aa$, there exists $B_1\subseteq B$ finite such that
              $a'\ind^d_{AB_1}B$.
            As $tp(b'/Aa')$ is supersimple, there exists $B_2\subseteq B$ finite such that $b'\ind^d_{Aa'B_2}Ba'$.
        	 Now, taking $B_0=B_1B_2$ we have $a'\ind_{AB_0}B$ and $b'\ind_{Aa'B_0}B$. 
        	 By left transitivity we obtain $a'b'\ind_{AB_0}B$.			
        
        \emph{4}.  
            Assume $p(x)$ and $q(y)$ are supersimple over $A$ and let $B\supseteq A$ and $ab\models p(x)\cup q(y)$.
            Then $tp(a/A)$ and $tp(b/Aa)$ are supersimple  by \emph{1}. 
            By \emph{3}, $tp(ab/A)$ is supersimple. 
            So, there is some finite $B_0\subseteq B$ with $ab\ind^d_{AB_0}B$.
        
        \emph{5}$\Rightarrow$) is  trivial by $1$.
        
        \emph{5}$\Leftarrow$). Let $B\supseteq A$ and $a\models p$. 
            As $tp(a/A)$ is supersimple, there exists $B_0\subseteq B$ finite such that $a\ind^d_{AB_0}B$.
        				
        \emph{6} follows from \emph{5}, since any completion of $p\lor q$ is either a completion of $p$ or a
          completion of $q$.
    \end{proof}

	We remember the following result for $DD$ from C\'ardenas, Farr\'e\cite{CardenasFarre1}:
	\begin{lemma}\label{DDEqCardinals2}
		Let $p$ be a partial type over a set of parameters $A$.
		Let $\kappa$ be any regular cardinal number.
		The following are equivalent: 
		\begin{enumerate}
			\item $DD(p)<\kappa_+$.
			\item For every $B\supseteq A$ and $a\models p$, there exists a set $B_0\subseteq B$ with $|B_0|<\kappa$
			  such that $a\ind^d_{AB_0}B$.
		\end{enumerate}
	\end{lemma}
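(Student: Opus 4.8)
The plan is to prove both implications by contraposition, after first translating the hypothesis into a purely combinatorial statement about dividing chains. Since $\kappa$ is regular and the ordering on the $DD$-values places every $\gamma_\pm$ with $\gamma<\kappa$, as well as $\kappa_-$, strictly below $\kappa_+$, one checks that $DD(p)\geq\kappa_+$ holds if and only if there is a dividing chain of partial types of depth $\kappa$ in $p$: a chain of depth $>\kappa$ truncates (by restriction to indices $<\kappa$) to one of depth exactly $\kappa$, and a chain of depth $\kappa$ forces the supremum of depths to be $\geq\kappa$ and attained there, hence the value $\geq\kappa_+$; conversely, if no chain of depth $\kappa$ exists then every chain has depth $<\kappa$, so the supremum is $\leq\kappa$ and unattained at $\kappa$, giving a value below $\kappa_+$. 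Thus $DD(p)<\kappa_+$ is equivalent to the nonexistence of a dividing chain of depth $\kappa$, and it suffices to show that such a chain exists if and only if condition \emph{2} fails.

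For the direction "chain $\Rightarrow\neg$\emph{2}", I would start from a dividing chain $(p_i:i<\kappa)$, $(A_i:i<\kappa)$, take $a\models\bigcup_{i<\kappa}p_i$ (consistent, as an increasing union of consistent partial types), and set $B=\bigcup_{i<\kappa}A_i\supseteq A$. Given any $B_0\subseteq B$ with $|B_0|<\kappa$, regularity of $\kappa$ yields some $j<\kappa$ with $B_0\subseteq A_{<j}$, hence $AB_0\subseteq A_{<j}$. Now $p_j$ divides over $A_{<j}$, and since a $C$-indiscernible sequence is $C'$-indiscernible whenever $C'\subseteq C$, dividing over $A_{<j}$ implies dividing over the smaller set $AB_0$. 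As $p_j\subseteq tp(a/B)$ and an extension of a dividing type divides, $tp(a/B)$ divides over $AB_0$, i.e.\ $a\nind^d_{AB_0}B$. Since $B_0$ was arbitrary, \emph{2} fails.

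For the converse "$\neg$\emph{2} $\Rightarrow$ chain", I would fix $B\supseteq A$ and $a\models p$ witnessing the failure of \emph{2}, and build the chain by recursion on $i<\kappa$. Suppose $c_j\in B$ have been chosen for $j<i$; put $A_{<i}=A\cup\{c_j:j<i\}$, so that $B_0:=\{c_j:j<i\}\subseteq B$ has $|B_0|<\kappa$ by regularity. By the failure of \emph{2}, $a\nind^d_{AB_0}B$, i.e.\ $tp(a/B)$ divides over $A_{<i}=AB_0$; I then pick a formula $\varphi_i(x,c_i)\in tp(a/B)$ (so $c_i\in B$) dividing over $A_{<i}$, and set $A_i=A_{<i}\cup c_i$ and $p_i=tp(a/A_i)$. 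Then $p\subseteq p_0$, $p_{<i}\subseteq p_i$, $A_{<i}\subseteq A_i$, and $p_i$ divides over $A_{<i}$ because it contains $\varphi_i(x,c_i)$, so $(p_i:i<\kappa)$, $(A_i:i<\kappa)$ is a dividing chain of depth $\kappa$ in $p$.

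The routine ingredients are the two monotonicity properties of dividing (an extension of a dividing type divides, and dividing over a set implies dividing over any subset of it) together with the fact that a complete type divides over $C$ exactly when it contains a formula dividing over $C$. The step I expect to require the most care is the bookkeeping that turns the $\pm$-decorated value condition $DD(p)<\kappa_+$ into the clean statement "no dividing chain of depth $\kappa$", together with the two uses of regularity of $\kappa$: bounding $|A_{<i}\setminus A|<\kappa$ throughout the construction, and locating a single index $j$ that absorbs an arbitrary small $B_0$ in the first direction.
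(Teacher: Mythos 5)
Your argument is correct, and it is the natural proof: the paper itself states this lemma without proof, importing it from the companion paper \cite{CardenasFarre1}, and your reduction of $DD(p)<\kappa_+$ to ``no dividing chain of depth $\kappa$'' followed by the two contrapositive constructions (taking $a\models\bigcup_i p_i$ over $B=\bigcup_i A_i$ and using regularity to absorb a small $B_0$ into some $A_{<j}$ in one direction; recursively extracting dividing formulas $\varphi_i(x,c_i)\in tp(a/B)$ in the other) is exactly the standard argument such a proof takes. The routine facts you invoke (monotonicity of dividing in both the type and the base, and that a complete type divides over $C$ iff it contains a formula dividing over $C$) are all as stated, so there is nothing to flag.
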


	\begin{corollary}\label{EqsSupersimple}  
    	The definition of supersimple does not depend on the set of parameters. 
    	Moreover, the following are equivalent for a partial type $p$ over $A$: 
    	  \emph{1}. $p$ is supersimple, 
    	  \emph{2}. $DD(p)<\omega_+$,
    	  \emph{3}. $DU(p)<\infty$,
    	  \emph{4}. For every completion $q\in S(A)$ of $p$, $SU^d(q)<\infty$.
    	  \emph{5}. For every completion $q\in S(A)$ of $p$, $SU^f(q)<\infty$.
	\end{corollary}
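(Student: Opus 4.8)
The plan is to prove the equivalences in two stages. First I would establish $1\Leftrightarrow 2\Leftrightarrow 3$ for an arbitrary partial type $p$ over $A$, since these three conditions are essentially immediate repackagings of earlier results. Then I would fold in conditions $4$ and $5$ by localizing supersimplicity to the completions of $p$, using Remark~\ref{basicPropSS} together with the complete-type equivalences of Proposition~\ref{SU,DU are infinite same time}. The parameter-independence of the definition will then fall out formally: once it is shown that $p$ is supersimple over $A$ exactly when $DU(p)<\infty$, and recalling that $DU(p)$ was already proved not to depend on the ambient set of parameters, being supersimple cannot depend on the choice of $A\supseteq dom(p)$ either.

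For $1\Leftrightarrow 2$ I would apply Lemma~\ref{DDEqCardinals2} with the regular cardinal $\kappa=\omega=\aleph_0$. Its second clause then reads: for every $B\supseteq A$ and every $a\models p$ there is some $B_0\subseteq B$ with $|B_0|<\omega$, i.e. $B_0$ finite, such that $a\ind^d_{AB_0}B$; this is verbatim the definition of supersimplicity (condition $1$), while its first clause is precisely $DD(p)<\omega_+$ (condition $2$). For $2\Leftrightarrow 3$ I would take the contrapositive of the first equivalence of Proposition~\ref{SU,DU are infinite same time}, namely $DD(p)\geq\omega_+ \Leftrightarrow DU(p)=\infty$; since the $DD$-values and the $DU$-values are each linearly ordered, negating both sides yields $DD(p)<\omega_+ \Leftrightarrow DU(p)<\infty$.

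The remaining work is to reach conditions $4$ and $5$, which quantify over completions $q\in S(A)$ of $p$. Here I would invoke Remark~\ref{basicPropSS}(5): $p$ is supersimple if and only if every $q\in S(A)$ extending $p$ is supersimple. Applying the already-established equivalence $1\Leftrightarrow 3$ to each such \emph{complete} type $q$, this becomes: $p$ is supersimple if and only if $DU(q)<\infty$ for every completion $q$. Finally, the second part of Proposition~\ref{SU,DU are infinite same time} states that for a complete type the conditions $DU(q)=\infty$, $SU^d(q)=\infty$ and $SU^f(q)=\infty$ are all equivalent, whence $DU(q)<\infty \Leftrightarrow SU^d(q)<\infty \Leftrightarrow SU^f(q)<\infty$; quantifying these equivalences over all completions $q$ chains together into $1\Leftrightarrow 4\Leftrightarrow 5$.

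The one delicate point I anticipate is the passage between $p$ and its completions. One cannot naively argue this through an identity $DU(p)=\max_q DU(q)$: completing a type only enlarges it, so $DU$ can strictly decrease, and the witness to $DU(p)=\infty$ furnished by Proposition~\ref{DUInfty} is a dividing \emph{extension} that need not be complete. Routing the argument through Remark~\ref{basicPropSS}(5) avoids this entirely, because supersimplicity is already known to localize to the complete types over $A$; the nontrivial fact that a witness of $DU(p)=\infty$ can be upgraded to a completion of infinite $SU^d$ is then supplied only implicitly, by the dividing-chain construction underlying Proposition~\ref{SU,DU are infinite same time}, and need not be reproved here.
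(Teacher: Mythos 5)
Your argument is correct and follows essentially the same route as the paper: Lemma~\ref{DDEqCardinals2} with $\kappa=\aleph_0$ gives $1\Leftrightarrow2$, Proposition~\ref{SU,DU are infinite same time} gives $2\Leftrightarrow3$ together with the complete-type equivalences needed for $4$ and $5$, and parameter-independence falls out of the invariance of the rank. The only cosmetic difference is that you localize to completions via Remark~\ref{basicPropSS}(5) where the paper instead cites the inequality chain of Remark~\ref{SUd_SUf_DU}; both rest on the same underlying facts.
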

	\begin{proof}
	    By previous Lemma, the fact that $DD(p)$ does not depend on the set of parameters, Remark~\ref{SUd_SUf_DU}
	      and Proposition~\ref{SU,DU are infinite same time}. 
	\end{proof}
	
    We can obtain other two equivalences of the notion of supersimple type replacing dividing by forking.
    We can define a forking chain of partial types in $p$ in a similar way to dividing chain (see C\'ardenas, Farr\'e\cite{CardenasFarre1}).

    \begin{remark}
        Let $p(x)$ be a partial type over $A$.
		$p$ is \textbf{supersimple} if and only it verifies the following equivalent conditions:
        \begin{enumerate}
            \item For every $B\supseteq A$ and every realization $a$ of $p(x)$, there exists a finite set
              $B_0\subseteq B$ with $a\ind^f_{AB_0}B$.
            \item There is not a forking chain of partial types in $p$ of depth $\omega$.
        \end{enumerate}
    \end{remark}
    \begin{proof}
        Arguing as in  Proposition~\ref{SU,DU are infinite same time} it follows that \emph{2} is equivalent to the fact that $DU^f(p)<\infty$. 
        Therefore, \emph{2} is equivalent to supersimple.
        The equivalence between \emph{1} and \emph{2} is similar to the proof of lemma~\ref{DDEqCardinals2}  changing dividing by forking. 
    \end{proof}

	We remember the definition of the Lascar rank $U$ and the definition of a superstable type of
	  Poizat\cite{Poizat}:
	\begin{definition}
		The \textbf{$U$-rank} for a complete type $p(x)\in S(A)$ is defined as follows:
		\begin{enumerate}
			\item $U(p)\geq 0$.
			\item $U(p)\geq\alpha+1$ if and only if for each cardinal number $\lambda$ there is a set $B\supseteq A$
			  and there are at least $\lambda$ many types $q(x)\in S(B)$ extending $p$ and such that $U(q)\geq\alpha$.
			\item $U(p)\geq\alpha$ with $\alpha$ a limit ordinal, if and only if $U(p)\geq\beta$ for all $\beta<\alpha$.
		\end{enumerate}
		$U(p)$ is the supremum of all $\alpha$ such that $U(p)\geq\alpha$.
		If such supremum does not exist we set $U(p)=\infty$.\par		
	\end{definition}

	\begin{definition}
		Let $p$ be a complete type. $p$ is \textbf{superstable} if and only if $U(p)<\infty$.
	\end{definition}

	In C\'ardenas, Farr\'e\cite{CardenasFarre3} we prove that a complete type is stable and supersimple if and only
	  if is superstable.

    \begin{example}\label{exampleSupersimple} 
        There is an example of a superstable and supersimple type $p\in S(A)$ not satisfying the discarded
          definition of supersimple. 
        However, for some $b$, $p$ considered over $Ab$ satisfies the alternative definition, so the discarded
          definition depend on the set of parameters.
    \end{example}
    \begin{proof}
        Consider the theory of infinitely many refining equivalence relations, which is a stable non supersimple
          theory with quantifier elimination.
        The language consists in $\omega$ equivalence relations $\{E_i:i\in\omega\}$, $E_0$ has infinite many
          classes, $E_{i+1}$ refines $E_i$ and each $E_i$-class is partitioned into infinitely many
          $E_{i+1}$-classes.	
        Given $d$ and $C$, denote $\rho(d/C)=\infty$ if $d\in C$, $\rho(d/C)=sup\{n:dE_nc\text{ for some }c\in C\}$
          otherwise. 
        Here we consider $\omega<\infty$. One can verify that $tp(d/BC)$ divides over $C$ if and only if
          $\rho(d/C)<\rho(d/BC)$. So,
        	\[D\ind_CB\Leftrightarrow\text{ for every }d\in D:\rho(d/C)=\rho(d/BC)\]
        
        Now we choose and fix $a$ and $b$ such that $aE_ib$ for every $i\in\omega$ and take $A=\{a_i:i\in\omega\}$
          such that  $aE_ia_i$ and $a\cancel{E_{i+1}}a_i$ for every $i\in\omega$. 
        The example is $p=tp(a/A)$.   
        
        $p$ is supersimple: Given $B\supseteq A$ and  $a'\models p$, $\rho(a'/A)=\omega$ and $\rho(a'/B)\geq\omega$.
        So, taking $B_0=\{a'\}\cap B$, we have $a'\ind_{AB_0}B$.
        
        $p$ is superstable: $p$ is not algebraic, so $U(p)\geq 1$. On the other hand, it is immediate to check that
          for any parameter set, $p$ only has a non algebraic extension, so applying the definition of $U$,
          $U(p)\not\geq2$.
        
        $p$ does not satisfy the discarded definition of supersimple but $p$ considered as a partial type over
          $Ab$ satisfies the discarded definition of supersimple: for every finite $B_0\subseteq A$, we have
          $\rho(a/B_0)<\omega$,  
        So, $a\nind_{B_0}A$. 
        But given $B\supseteq Ab$ and $a'\models p$, we have $\rho(a'/b)=\omega$ and $\rho(a'/B)\geq\omega$. 
        So, taking $B_0=\{a'b\}\cap B$, we have $a'\ind_{B_0}B$.
    \end{proof}

	Although for a particular type the discarded definition is not equivalent to supersimplicity, for a fixed theory
	  the fact that all types satisfy one of the definitions is equivalent to all types satisfy the other.
	\begin{remark}
		The following are equivalent:
		\emph{1}: $T$ is supersimple.
		\emph{2}: $\{x=x\}$ is supersimple.
		\emph{3}: Every complete type is supersimple.
		\emph{4}: Every partial type is supersimple.
		\emph{5}: For every $p\in S(A)$, there exists a finite subset $A_0\subseteq A$ such that $p$ does not
		  divide over $A_0$.
		\emph{6}: For every $p\in S(A)$, every $B\supseteq A$ and every realization $a$ of $p$, there exists a
		  finite subset $B_0\subseteq B$ such that $a\ind^d_{AB_0}B$.
	\end{remark}
	\begin{proof}
	    The equivalence between $3$ and $4$ follows from remark~\ref{basicPropSS}. 
	    The other are standard, see $13.1$, and $13.4$ in Casanovas\cite{Casanovas11}.
	\end{proof}

	Now we improve slightly for $SU^d$ and $SU^f$ the known fact that in simple theories $SU$ is preserved by
	  non-forking extensions.
	We recall that a theory is called \textbf{Extensible} if forking has existence, that is every complete type
	  does not fork over its parameter set.
	For instance, simple theories are extensible.

	\begin{proposition}\label{SUcharDiv}
		In a extensible theory, let $p(x)\in S(A)$ and $q(x)\in S(B)$ be such that $p(x)\subseteq q(x)$
		  and $tp(B/A)$ is simple. 
		If $q$ does not fork over $A$ then $SU^d(q)=SU^d(p)$ and $SU^f(q)=SU^f(p)$.
	\end{proposition}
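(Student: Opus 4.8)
The plan is to prove the two reverse inequalities $SU^d(q)\ge SU^d(p)$ and $SU^f(q)\ge SU^f(p)$; the inequalities $SU^d(q)\le SU^d(p)$ and $SU^f(q)\le SU^f(p)$ are immediate from the monotonicity of the foundation rank under extension (point \emph{1} of the previous remark for $SU^d$, and its analogue for $SU^f$) and need neither extensibility nor simplicity of $tp(B/A)$. Since dividing implies forking, the hypothesis that $q$ does not fork over $A$ also gives that $q$ does not divide over $A$; hence $q$ is simultaneously a non-forking and a non-dividing extension of $p$, which sets up both cases uniformly. Writing $q=tp(a/B)$ and $p=tp(a/A)$, I would prove by induction on $\alpha$ the statement: for all $A\subseteq B$ and all $a$ with $tp(B/A)$ simple and $a\ind^d_A B$ (resp. $a\ind^f_A B$), $SU^d(tp(a/A))\ge\alpha$ implies $SU^d(tp(a/B))\ge\alpha$ (resp. for $f$). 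The zero and limit stages are trivial, so everything is in the successor step.

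For the successor step, suppose $SU^\bullet(tp(a/A))\ge\alpha+1$, witnessed by a forking (resp. dividing) extension $p'=tp(a/C)$ with $C\supseteq A$, $SU^\bullet(p')\ge\alpha$ and $p'$ forking (dividing) over $A$. First I would make $B$ and $C$ independent over $Aa$: using that $tp(B/A)$ simple implies $tp(B/Aa)$ simple, and that simple types have the extension property, I can choose $B^*\equiv_{Aa}B$ with $B^*\ind^d_{Aa}C$; replacing $B$ by $B^*$ only conjugates $tp(a/B)$ by an $Aa$-automorphism and so preserves its rank, so I may assume $B\ind^d_{Aa}C$ outright. Two things then need checking. \emph{(i)} $tp(a/BC)$ forks (divides) over $B$: otherwise $a\ind_B C$, which together with $a\ind_A B$ gives $a\ind_A BC$ by transitivity, hence $a\ind_A C$, contradicting that $p'$ forks (divides) over $A$. \emph{(ii)} $a\ind_C B$: from $a\ind_A B$ and $B\ind_{Aa}C$ one gets, via symmetry and transitivity, $B\ind_A aC$ and therefore $a\ind_C B$. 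By \emph{(ii)}, $tp(a/BC)$ is a non-forking (non-dividing) extension of $tp(a/C)$, and $tp(B/C)$ is again simple, so the induction hypothesis applied with base $C$ yields $SU^\bullet(tp(a/BC))\ge SU^\bullet(tp(a/C))=SU^\bullet(p')\ge\alpha$ (the equality because $tp(a/C)$ is conjugate to $p'$ over $Aa$). Combined with \emph{(i)}, the extension $tp(a/BC)$ witnesses $SU^\bullet(tp(a/B))\ge\alpha+1$.

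The main obstacle is that this is the classical argument for the preservation of $SU$ under non-forking extensions, which normally requires the ambient theory to be simple, whereas here it is only extensible. The independence calculus it uses — symmetry, transitivity, base monotonicity, the extension property and the coincidence of forking with dividing — fails in general, but is available for every configuration having $B$ on one side precisely because $tp(B/A)$ is simple; this is the content of the theory of simple types (Hart, Kim, Pillay\cite{HKP}, Chernikov\cite{Chernikov}). The delicate point is to check that simplicity of the type of $B$ persists along the base enlargements $A\subseteq Aa\subseteq C$ occurring in the induction: each of these is a \emph{free} extension (we have $a\ind^d_A B$ and we arranged $B\ind^d_{Aa}C$), and a free extension of a simple type is again simple, so the simple-type calculus and the induction hypothesis remain applicable with base $C$. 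Keeping every appeal to symmetry and transitivity restricted to such configurations, the forking and the dividing cases run in exact parallel and yield both equalities.
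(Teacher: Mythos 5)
Your overall architecture coincides with the paper's: the inequality $SU^\bullet(q)\le SU^\bullet(p)$ is indeed just monotonicity, and the substantive direction is proved by induction on $\alpha$, taking a dividing witness $tp(a/C)$ of $SU^\bullet(p)\ge\alpha+1$, arranging independence between $B$ and $C$ over $Aa$, applying the induction hypothesis to $tp(a/BC)$ over the base $C$, and checking that $tp(a/BC)$ divides over $B$. The gap is in how you arrange that independence. You move $B$, invoking ``simple types have the extension property'' to find $B^*\equiv_{Aa}B$ with $B^*\ind^d_{Aa}C$. The paper instead moves $C$: it uses the hypothesis that $T$ is extensible to find $C''\equiv_{Aa}C$ with $C''\ind^f_{Aa}B$, i.e.\ full existence for $\ind^f$ applied to $tp(C/Aa)$ --- a type that is \emph{not} known to be simple --- and this is the only place extensibility enters. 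Your version would render the extensibility hypothesis superfluous, which should be a warning sign: full existence for a simple type in an arbitrary theory is not part of the toolkit this paper works with (it would require showing that a simple type never forks over its own domain and admits independent realizations over every superset, i.e.\ a localized extension theorem), and you neither prove it nor cite a precise statement of it. As it stands, the pivotal step of your induction rests on an unjustified claim.

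There are also two places where the claimed ``exact parallel'' between $\ind^d$ and $\ind^f$ breaks down. In step \emph{(i)} you derive $a\ind_A BC$ from $a\ind_A B$ and $a\ind_B C$ by ``transitivity''; for $\ind^d$ this is right transitivity with the non-simple element $a$ on the left, which is not a general fact. The paper does all the bookkeeping with $\ind^f$ and only converts to dividing at the end, using \emph{left} transitivity of non-dividing with the simple element on the left: from $B\ind^d_A C$ and $a\ind^d_{B} C$ one gets $Ba\ind^d_A C$, hence $a\ind^d_A C$. Since you do have $B\ind_A aC$ available from \emph{(ii)}, this is repairable, but as written the dividing case of \emph{(i)} is not justified. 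Finally, in \emph{(ii)} you conclude $a\ind_C B$ from $B\ind_C a$ by symmetry; in the form the paper cites (Proposition 7.3 of Casanovas's notes), symmetry concludes independence \emph{of the simple side}, so this application needs $tp(a/C)$ to be simple. The paper secures the analogous fact by first reducing to the case where $q$ is supersimple (harmless, since the statement is trivial when $SU^\bullet(q)=\infty$) and then deducing simplicity of $tp(a/C)$ from that of $tp(a/B)$ and $tp(B/A)$; you omit this reduction, and your argument needs it.
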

	\begin{proof}
        We will prove that $SU^d(p)\leq SU^d(q)$. 
	    The proof is similar for $SU^f$.
		If $SU^d(q)=\infty$ the result is immediate, so we can assume without loss of generality that $q$ is
		  supersimple.
		We use induction on $\alpha$ to prove that if $SU^d(p)\geq\alpha$ then $SU^d(q)\geq\alpha$.
		This is clear for $\alpha=0$ or limit ordinal.
		Assume $SU^d(p)\geq\alpha+1$.
		Now, by the definition of $SU^d$, there is a dividing extension $p_1\in S(C)$ of $p$ such that
		  $SU^d(p_1)\geq\alpha$.
		
		Let $d\models q$ and $d'\models p_1$.
		As $d'\equiv_{A}d$,	there exists $C'$ such that $d'C\equiv_AdC'$.
		Using $T$ extensible we can choose $C''$ such that $C''\equiv_{Ad}C'$ and $C''\ind^f_{Ad}B$.
		
		As $d\ind^f_A B$ and $C''\ind^f_{Ad}B$, by left transitivity, $C''d\ind^f_A B$. 
		As $tp(B/A)$ is simple, using symmetry (Proposition~\emph{7.3} in Casanovas\cite{Casanovas11b}), 
		  $B\ind^f_A C''d$ and therefore $B\ind^f_{C''}d$. 
		Since $tp(d/B)$ and $tp(B/A)$ are simple, $tp(dB/A)$ is simple and therefore $tp(d/C'')$ is simple. 
		Using symmetry again $d\ind^f_{C''}B$.
		Since $tp(B/A)$ is simple $tp(B/C'')$ is  simple and therefore  $tp(C''B/C'')$ is also simple. 
		By induction hypothesis, $SU^d(d/C''B)\geq\alpha$. 
		By $B\ind^f_A C''d$ we get $B\ind^d_{A}C''$.
		With $d\nind^d_{A}C''$, we obtain $d\nind^d_BC''$ and therefore $tp(d/C''B)$ divides over $B$.
		So, finally $SU^d(q)\geq\alpha+1$.
	\end{proof}
	
	\begin{corollary}
		In an extensible theory, let $p$ be a complete type over $A$ and $q$ be a partial type over $B$ such that 
		  $q$ is a non forking extension of $p$ and $tp(B/A)$ is simple.
		If $q$ is supersimple then $p$ is supersimple.
	\end{corollary}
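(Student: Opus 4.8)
The plan is to reduce to the complete-type situation already settled in Proposition~\ref{SUcharDiv}, using the characterization of supersimplicity via $SU^d$ from Corollary~\ref{EqsSupersimple}.

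First, I would produce a suitable completion of $q$. Since $q$ is a non-forking extension of $p$, it is a partial type over $B$ with $p\subseteq q$ that does not fork over $A$; unpacking what this means for a partial type, there is a realization $a\models q$ with $a\ind^f_A B$. Put $q'=tp(a/B)$. Then $q'\in S(B)$ extends $q$, hence extends $p$, and $q'$ does not fork over $A$. Thus $q'$ is a genuine non-forking extension of the complete type $p$ in exactly the sense required by Proposition~\ref{SUcharDiv}.

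Next, I would transfer supersimplicity to this completion. As $q$ is supersimple, Corollary~\ref{EqsSupersimple} (the equivalence of supersimplicity of $q$ with $SU^d(r)<\infty$ for every completion $r\in S(B)$ of $q$) applies to the completion $q'$ and gives $SU^d(q')<\infty$; equivalently, the complete type $q'$ is supersimple. Finally, I would apply Proposition~\ref{SUcharDiv} to the pair $p\subseteq q'$: all its hypotheses hold, since $T$ is extensible, $p\in S(A)$, $q'\in S(B)$, $tp(B/A)$ is simple, and $q'$ does not fork over $A$. Therefore $SU^d(p)=SU^d(q')<\infty$, and because $p$ is complete its only completion in $S(A)$ is itself, so Corollary~\ref{EqsSupersimple} yields that $p$ is supersimple.

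The only delicate point is the first step: one must choose a completion $q'$ of $q$ that is simultaneously a non-forking extension of $p$ (so that Proposition~\ref{SUcharDiv} is applicable) and supersimple (so that its $SU^d$ is finite). Both properties come for free once $q'$ is taken to be the type of a realization of $q$ that is non-forking-independent from $B$ over $A$, so no real obstacle arises; the remaining steps are direct invocations of the two cited results.
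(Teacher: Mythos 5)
Your argument is correct and is exactly the derivation the paper intends (the corollary is stated without proof as an immediate consequence of Proposition~\ref{SUcharDiv}): you complete $q$ to a non-forking completion $q'\in S(B)$ of $p$, use Corollary~\ref{EqsSupersimple} to get $SU^d(q')<\infty$, and transfer this to $p$ via Proposition~\ref{SUcharDiv}. The one step you rightly flag as delicate --- extending a partial type that does not fork over $A$ to a complete type over $B$ that does not fork over $A$ --- is the standard fact that forking formulas form an ideal, so no gap remains.
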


    By Proposition~\emph{3.9} in C\'ardenas, Farr\'e\cite{CardenasFarre1}, we have a similar corollary using $DD$
      with somewhat different hypotheses:
	\begin{corollary}
		Let $p$ be a complete type over $A$ and $q$ be a partial type over $B$ such that 
		  $q$ is a non forking extension of $p$ and $tp(B/A)$ is simple and co-simple.
		If $q$ is supersimple then $p$ is supersimple.
	\end{corollary}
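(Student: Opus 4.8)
The plan is to transfer everything to the dividing depth and then quote the imported result. By Corollary~\ref{EqsSupersimple}, a partial type $r$ is supersimple exactly when $DD(r)<\omega_+$, and this characterisation applies to the complete type $p$ over $A$ as well as to the partial type $q$ over $B$. Hence the corollary is equivalent to the implication $DD(q)<\omega_+\Rightarrow DD(p)<\omega_+$, and it is this comparison between the two dividing depths that I would establish.

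First I would dispose of the trivial direction, which is automatic and uses no hypothesis on $tp(B/A)$: since $p\subseteq q$ we have $q\vdash p$, so by point \emph{1} in Remark~\ref{DUBasic} we have $DU(q)\leq DU(p)$, and therefore (through Corollary~\ref{EqsSupersimple}) $p$ supersimple already forces $q$ supersimple. The actual content of the corollary is the opposite comparison, $DD(p)\leq DD(q)$, and this is precisely what Proposition~\emph{3.9} of C\'ardenas, Farr\'e\cite{CardenasFarre1} supplies under the present hypotheses, namely $q$ a non-forking extension of $p$ with $tp(B/A)$ simple and co-simple. Granting that proposition, $DD(q)<\omega_+$ yields $DD(p)<\omega_+$, and a final appeal to Corollary~\ref{EqsSupersimple} concludes that $p$ is supersimple.

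The genuine work, and the only real obstacle, is the reverse inequality $DD(p)\leq DD(q)$, which is where the hypotheses enter and which I would simply cite rather than reprove. The point is that ``$tp(B/A)$ simple and co-simple'' takes over the role played by extensibility in Proposition~\ref{SUcharDiv} and its corollary: co-simplicity is what allows a dividing chain witnessing a given depth for $p$ to be lifted, across the non-forking extension, to a dividing chain over the parameters of $q$ of the same depth, without assuming existence of non-forking extensions in the ambient theory. Since that lifting is carried out in \cite{CardenasFarre1}, nothing beyond the translation through Corollary~\ref{EqsSupersimple} remains to be done on our side.
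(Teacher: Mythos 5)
Your proof follows essentially the same route as the paper's: translate supersimplicity of $p$ and $q$ into the condition $DD<\omega_+$ via Corollary~\ref{EqsSupersimple} and invoke Proposition~3.9 of C\'ardenas, Farr\'e\cite{CardenasFarre1} for the comparison of dividing depths. The only cosmetic difference is that the paper first passes to a completion $\bar{q}$ of $q$ (supersimple by Remark~\ref{basicPropSS}) before applying that proposition, whereas you apply it to the partial type $q$ directly.
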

	\begin{proof}
		If $q$ is supersimple, any completation $\bar{q}$ of $q$ is supersimple and $DD(\bar{q}\leq\omega_-$.
		By Proposition~\emph{3.9} in C\'ardenas, Farr\'e\cite{CardenasFarre1}, $DD(p)\leq\omega_-$ and therefore is
		  supersimple.
	\end{proof}

    It is immediate to conclude from Proposition~\emph{3.11} in C\'ardenas, Farr\'e\cite{CardenasFarre1} the following:
    \begin{corollary}
        Let $p(x)$ be a non-supersimple partial type over $A$. Then,
		\begin{enumerate}
            \item There exists $B\supseteq A$ and a completion $q\in S(B)$ of $p$ dividing over $A$ with $q$ non-supersimple.
		    \item If $M\supseteq A$ is an $|A|^+$-saturated model, there exists a completion $q\in S(M)$ of $p$ dividing over $A$ with $q$ non-supersimple.
		\end{enumerate}        
    \end{corollary}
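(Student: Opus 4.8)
The plan is to translate ``non-supersimple'' into the rank language of the previous sections and then play Proposition~\ref{DUInfty} and Corollary~\ref{EqsSupersimple} off against each other. By Corollary~\ref{EqsSupersimple}, a partial type over a set $C$ is non-supersimple exactly when its $DU$-rank is $\infty$, and this happens precisely when some completion in $S(C)$ has $SU^d=\infty$; for a complete type $q$ it is just $SU^d(q)=\infty$. These equivalences are what let me convert the \emph{partial} dividing extensions produced by Proposition~\ref{DUInfty} into \emph{complete} ones without losing the value $\infty$, which is the only delicate point in the whole argument.

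First I would prove \emph{1}. Since $p$ is non-supersimple we have $DU(p)=\infty$, so Proposition~\ref{DUInfty} gives a partial type $q'(x)$ over some $B\supseteq A$ with $p\subseteq q'$, $q'$ dividing over $A$ and $DU(q')=\infty$. As $DU(q')=\infty$, the type $q'$ is non-supersimple over $B$, so by Corollary~\ref{EqsSupersimple} some completion $q\in S(B)$ of $q'$ satisfies $SU^d(q)=\infty$, that is, $q$ is non-supersimple. Finally $q\supseteq q'$ still divides over $A$ (an extension of a type that divides over $A$ divides over $A$), and $q\supseteq p$. This $q$ is as required.

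For \emph{2} I would push the witness of \emph{1} into the saturated model $M$ using the automorphism-invariance of $DU$. Fix a non-supersimple completion $q_0\in S(A)$ of $p$ (it exists by Corollary~\ref{EqsSupersimple}), and apply Proposition~\ref{DUInfty} to $q_0$ to get $q'\supseteq q_0$ over some $B\supseteq A$ dividing over $A$ with $DU(q')=\infty$; then $q'$ implies a formula $\varphi(x,b)$ dividing over $A$ with $b$ a finite tuple, and $DU(q_0\cup\{\varphi(x,b)\})=\infty$ by Remark~\ref{DUBasic}. Because $M$ is $|A|^+$-saturated and $b$ is finite, $tp(b/A)$ is realised by some $b'\in M$, and $\varphi(x,b')$ again divides over $A$. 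Now set $r(x)=q_0(x)\cup\{\varphi(x,b')\}$, a partial type over $Ab'\subseteq M$. Any $A$-automorphism sending $b'$ to $b$ fixes $q_0$ (as $q_0\in S(A)$), so by conjugation-invariance of $DU$ we get $DU(r)=DU(q_0\cup\{\varphi(x,b)\})=\infty$. Hence $r$ is non-supersimple over $M$, and Corollary~\ref{EqsSupersimple} yields a completion $q\in S(M)$ of $r$ with $SU^d(q)=\infty$. This $q$ extends $p$, divides over $A$ (it contains $\varphi(x,b')$) and is non-supersimple.

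The main obstacle is exactly the completion step: by Remark~\ref{DUBasic} extending a type can only decrease $DU$, so one cannot blindly complete $q'$ or $r$ and expect rank $\infty$ to survive. What rescues the argument is the equivalence in Corollary~\ref{EqsSupersimple} between $DU=\infty$ of the partial type and $SU^d=\infty$ of \emph{some} completion, which guarantees a good completion exists even though not every completion works; for \emph{2} the extra subtlety is transporting the dividing formula into $M$, handled by $|A|^+$-saturation together with the automorphism-invariance of $DU$ recorded after the definition of $DU$. I also note that, as the authors indicate, the statement drops out immediately from Proposition~\emph{3.11} of C\'ardenas, Farr\'e\cite{CardenasFarre1} via the dictionary $DD(p)\ge\omega_+\Leftrightarrow DU(p)=\infty$ of Proposition~\ref{SU,DU are infinite same time}.
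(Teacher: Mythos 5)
Your proof is correct. Where the paper disposes of this corollary in one line by invoking Proposition~3.11 of C\'ardenas--Farr\'e \cite{CardenasFarre1} (an external statement about the $DD$-rank, transported via the equivalence $DD(p)\geq\omega_+\Leftrightarrow DU(p)=\infty$), you give a self-contained argument inside the present paper: Proposition~\ref{DUInfty} produces a \emph{partial} dividing extension of $DU$-rank $\infty$, and the equivalence $1\Leftrightarrow 3\Leftrightarrow 4$ of Corollary~\ref{EqsSupersimple} converts ``$DU=\infty$ for the partial type'' into ``$SU^d=\infty$ for \emph{some} completion'', which is exactly what is needed since, as you rightly flag, completing a type can only lower $DU$ and so a naive completion step would fail. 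Your handling of part~\emph{2} is also sound: the dividing witness is a single formula $\varphi(x,b)$ with $b$ finite, so $|A|^+$-saturation of $M$ realises $tp(b/A)$ inside $M$, and conjugation-invariance of $DU$ and of dividing over $A$ preserves both $DU(q_0\cup\{\varphi(x,b')\})=\infty$ and the dividing of $\varphi(x,b')$ over $A$; one then completes over $M$ using Corollary~\ref{EqsSupersimple} again (legitimately, since supersimplicity of a partial type does not depend on the parameter set). The trade-off is clear: the paper's route is shorter but outsources the content to the companion paper, while yours makes the corollary a genuine consequence of the $DU$-machinery developed here and exposes the one delicate point (surviving the completion step) explicitly.
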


\end{document}